\numberwithin{equation}{section}
\newtheorem{theorem}{Theorem}[section]
\newtheorem{lemma}{Lemma}[section]
\newtheorem{proposition}{Proposition}[section]
\newtheorem{remark}{Remark}[section]
\newtheorem{Ass}{Assumption}[section]
\numberwithin{equation}{section}
\renewcommand{\a}{\alpha}
\renewcommand{\b}{\beta}
\newcommand{\la}{\lambda}
\newcommand{\La}{\Lambda}
\def\R{{\mathbb{R}}}
\def\N{{\mathbb{N}}}
\def\D{{\mathcal{D}}}
\def\F{{\mathcal{F}}}
\def\L{{\mathcal{L}}}
\begin{document}
\title[SG of binary collision processes on $d$-dim. lattice ]
{On the spectral gap of the Kac walk and other binary collision processes on $d$-dimensional lattice}
\author{Makiko Sasada$^{*)}$}
\footnote{
\hskip -6mm
$^{*)}$ Department of Mathematics, Keio University,
3-14-1, Hiyoshi, Kohoku-ku, Yokohama 223-8522, Japan.
e-mail: sasada@math.keio.ac.jp 
}
\footnote{
\hskip -6mm
\textit{Keywords: spectral gap, Kac walk, energy exchange models, zero-range process, exclusion process.}}
\footnote{
\hskip -6mm
\textit{MSC: primary 60K35, secondary 82C31.}}

\footnote{
\hskip -6mm
\textit{The work wad supported by JSPS Grant-in-Aid for Research Activity Start-up Grant Number 23840036.
}}

\begin{abstract}
We give a lower bound on the spectral gap for a class of binary collision processes. In \cite{Ca08}, Caputo showed that, for a class of binary collision processes given by simple averages on the complete graph, the analysis of the spectral gap of an $N$-component system is reduced to that of the same system for $N = 3$. In this paper, we give a comparison technique to reduce the analysis of the spectral gap of  binary collision processes given by simple averages on $d$-dimensional lattice to that on the complete graph. We also give a comparison technique to reduce the analysis of the spectral gap of binary collision processes which are not given by simple averages to that given by simple averages. Combining them with Caputo's result, we give a new and elementary method to obtain spectral gap estimates. The method applies to a number of binary collision processes on the complete graph and also on $d$-dimensional lattice, including a class of energy exchange models which was recently introduced in \cite{GKS}, and zero-range processes. 

\end{abstract}

\maketitle

\section{Introduction}\label{intro}

A sharp lower bound on the spectral gap of the process is essential to prove the hydrodynamic limit (cf. \cite{KL}). What is needed is that the gap, for the process confined to cubes of linear size $N$, shrinks at a rate $N^{-2}$. Up to constants, this is the best possible lower bound for a wide class of models discussed in the context of the study of the hydrodynamic limit. 

Most of the techniques used to obtain the required lower bound rely on special features of the model, or a recursive approach due to Lu and Yau \cite{LY}.
Recently, Caputo introduced a new and elementary method to obtain a lower bound on the spectral gap for some general class of binary collision processes which are reversible with respect to a family of product measures in \cite{Ca08}. In this paper, we extend his result in two ways. One way is that though in \cite{Ca08} only the process on the complete graph was 
considered, we consider the process on $d$-dimensional lattice where the interactions occur between nearest neighbor sites. We give a general method to compare the spectral gap of the local version on $d$-dimensional lattice and the original process on the complete graph. Secondly, we study a wider class of processes than the class studied in \cite{Ca08} and give a simple comparison technique between their spectral gaps. We emphasize that our technique can be applied to a wide class of processes which are reversible with respect to a family of product measures, and it allows to obtain the lower bound of the spectral gap easily. However, it is not necessarily sharp, so if the estimate given by our method is not enough sharp, then we need to try to use other techniques.

Following Caputo \cite{Ca08}, we first consider the following energy conserving binary collision model introduced by M. Kac in \cite{K}, called Kac walk. Let $\nu=\nu_{N,\omega}$ denote the uniform probability measure on the $N-1$ dimensional sphere of the radius $\sqrt{\omega}$
\begin{equation*}
S^{N-1}(\omega)=\{ \eta \in \R^N; \sum_{i=1}^N \eta_i^2 =\omega \},
\end{equation*}
and consider the $\nu$-reversible Markov process on $S^{N-1}(\omega)$ with infinitesimal generator given by 
\begin{equation*}
\mathcal{L^*}f(\eta)=\frac{1}{2N}\sum_{i,j=1}^N D_{i,j}f(\eta)
\end{equation*}
where 
\begin{equation*}
D_{i,j}f(\eta)= \frac{1}{2\pi} \int_{-\pi}^{\pi} [ f(R^{ij}_{\theta}\eta)-f(\eta)]d\theta,
\end{equation*}
and $R^{ij}_{\theta}, i \neq j$ is a clockwise rotation of angle $\theta$ in the plane $(\eta_i,\eta_j)$. As a convention, we take $R^{ii}_{\theta}=Id$.

This Kac walk represents a system of $N$ particles in one dimension evolving under a random collision mechanism. The state of the system is given by specifying the $N$ velocities $\eta_1, \eta_2, \cdots, \eta_N$. The random collision mechanism under which the state evolves is that at random times, a \lq\lq pair collision" take place in such a way that the total energy $\sum_{i=1}^N \eta_i^2$ is conserved. Under the above dynamics, after the particles $i$ and $j$ collide, the distribution of the velocities $(\eta_i,\eta_j)$ becomes uniform on the plane $(\eta_i,\eta_j)$.

Note that $-\mathcal{L^*}$ is a non-negative, bounded self-adjoint operator on $L^2(\nu)$. Any constant is an eigenfunction with eigenvalue $0$ and the spectral gap $\lambda^*=\lambda^*(N,\omega)$ is defined as 
\begin{equation}\label{eq:sg}
\la^*(N, \omega):=\inf\Big\{ \frac{ \nu(f(\mathcal{-L^*})f)}{\nu(f^2)}  \Big| \nu(f)=0, \ f \in L^2(\nu) \Big\}
\end{equation}
where $\nu(f)$ stands for the expectation $\int f d\nu$. We define $\la^*(N)=\displaystyle \inf_{\omega>0}\la^*(N, \omega)$. For the Kac walk, by change of variables, it is easy to see that $\la^*(N)=\la^*(N, \omega)$ for all $\omega>0$.

In \cite{CCL}, Carlen, Carvalho and Loss computed the exact value of $\la^*(N)$ for every $N$:
\begin{equation}
\la^*(N)=\frac{N+2}{4N}, \quad N \ge 2. \label{eq:exactvalue}
\end{equation}
Caputo gave a simplified method to show this. Recall Theorem 1.1 in \cite{Ca08}.
\begin{theorem}[Caputo]\label{thm:caputo0}
For $N \ge 3$,
  \begin{equation}\label{eq:caputo0}
\la^*(N) = (3\la^*(3)-1)(1-\frac{2}{N})+\frac{1}{N}.
  \end{equation}
In particular, (\ref{eq:exactvalue}) follows from (\ref{eq:caputo0}) with $\la^*(3)=\frac{5}{12}$.
\end{theorem}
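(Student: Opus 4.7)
The plan is to establish (\ref{eq:caputo0}) as a two-sided bound: induction on $N$ for the lower bound, and an explicit eigenfunction for the matching upper bound. The base case $N=3$ is trivial, since the right-hand side reduces to $\lambda^*(3)$. For the inductive step, I use a Dirichlet-form decomposition obtained by conditioning on one velocity at a time.

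Fix $f$ on $S^{N-1}(\omega)$ with $\nu(f)=0$ and set $g_k(\eta_k) := \nu(f \mid \eta_k)$. For each $k$, introduce the reduced generator on the remaining $N-1$ velocities,
$$\mathcal{L}^*_{N-1,k} := \frac{1}{2(N-1)}\sum_{\substack{i \neq j \\ i,j \neq k}} D_{ij}.$$
Conditionally on $\eta_k$, the measure $\nu$ is uniform on $S^{N-2}(\omega-\eta_k^2)$; since $\lambda^*(N-1,\omega')$ is independent of $\omega'$, the inductive hypothesis gives
$\nu(-f\,\mathcal{L}^*_{N-1,k}\,f \mid \eta_k) \geq \lambda^*(N-1)\,\mathrm{Var}(f\mid\eta_k)$.
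Averaging on $k$ and using the identity $\sum_{k=1}^N \mathcal{L}^*_{N-1,k} = \tfrac{N(N-2)}{N-1}\mathcal{L}^*$ (each pair $(i,j)$ with $i\neq j$ survives for exactly $N-2$ values of $k$), I obtain
\begin{equation*}
\mathcal{E}_N(f) \;\geq\; \frac{(N-1)\lambda^*(N-1)}{N(N-2)}\Bigl[N\nu(f^2) - \sum_{k=1}^N \nu(g_k^2)\Bigr].
\end{equation*}

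The recursion is closed by controlling $\sum_k \nu(g_k^2)$ sharply via the base case $N=3$. The idea is to apply the 3-site gap inside each triple: conditioning on the $N-3$ velocities outside $\{i,j,k\}$ gives a 3-site Kac walk on a sphere of the appropriate radius, so
$$\nu\bigl(-f(D_{ij}+D_{ik}+D_{jk})f \,\big|\, \{\eta_\ell\}_{\ell\notin\{i,j,k\}}\bigr) \;\geq\; 3\lambda^*(3)\,\mathrm{Var}\bigl(f \,\big|\, \{\eta_\ell\}_{\ell\notin\{i,j,k\}}\bigr).$$
Summing over the $\binom{N}{3}$ triples (each pair lies in $N-2$ triples) yields a complementary Dirichlet-form lower bound whose remainder involves three-site conditional expectations. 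Taking a suitable convex combination with the one-site inequality, with weights chosen so that the one-site and three-site conditional-expectation correction terms cancel, produces $N\lambda^*(N)-1 \geq (N-2)(3\lambda^*(3)-1)$, the lower-bound half of (\ref{eq:caputo0}). For the matching upper bound, direct computation of $\mathcal{L}^*$ on symmetric polynomial test functions (using the explicit action of $D_{ij}$ on monomials) produces an eigenfunction whose eigenvalue equals the right-hand side of (\ref{eq:caputo0}).

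The principal obstacle is the convex-combination step. The naive bound $\sum_k \nu(g_k^2) \leq N \nu(f^2)$ is useless and would collapse the one-site inequality to $\lambda^*(N) \geq 0$, so one must exploit the genuine constraints imposed by $\nu(f)=0$ together with $\sum_k \eta_k^2 = \omega$. Pinning down the exact weights needed in the combination of the one-site and three-site inequalities, and verifying that the resulting projection cancellation is exact rather than merely approximate, is the technical core of the argument; it is precisely this cancellation that explains why the right-hand side of (\ref{eq:caputo0}) depends only on $\lambda^*(3)$ and $N$.
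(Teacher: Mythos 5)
A preliminary remark: the paper contains no proof of Theorem \ref{thm:caputo0}; it is quoted from \cite{Ca08} and used as an external input, so there is no in-paper argument to compare yours against, and I can only judge the proposal on its own terms. Its first half is sound: the identity $\sum_{k=1}^N\mathcal{L}^*_{N-1,k}=\frac{N(N-2)}{N-1}\mathcal{L}^*$ is correct, conditioning on $\eta_k$ does yield the uniform law on $S^{N-2}(\omega-\eta_k^2)$, and the displayed inequality $\mathcal{E}_N(f)\ge\frac{(N-1)\la^*(N-1)}{N(N-2)}\bigl[N\nu(f^2)-\sum_k\nu(g_k^2)\bigr]$ follows; this is indeed the standard entry point, and it is also where \cite{Ca08} starts. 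The upper bound is likewise unproblematic for the Kac walk: $f=\sum_i\eta_i^4$ satisfies $\mathcal{L}^*f=-\frac{N+2}{4N}f+\mathrm{const.}$, which together with $\la^*(3)=\frac{5}{12}$ converts a matching lower bound into the claimed equality.

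The genuine gap is precisely the step you defer as ``the technical core'': the control of $\sum_k\nu(g_k^2)$. The two inequalities you propose to combine have correction terms of incompatible type. The one-site term $\sum_k\nu(g_k^2)$ measures projections of $f$ onto functions of a \emph{single} coordinate, whereas the three-site inequality, summed over triples, reads
\begin{equation*}
N(N-2)\,\mathcal{E}_N(f)\;\ge\;3\la^*(3)\Bigl[\binom{N}{3}\nu(f^2)-\sum_{i<j<k}\nu\bigl((\nu[f\,|\,\mathcal{F}_{\{i,j,k\}}])^2\bigr)\Bigr],
\end{equation*}
whose correction term measures projections onto functions of the $N-3$ \emph{complementary} coordinates. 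These live in unrelated subspaces of $L^2(\nu)$, no fixed convex combination makes them cancel for a general $f$, and the three-site display is in any case vacuous without an independent upper bound on its own correction term. A structural symptom that the argument has not actually been carried out: you set up an induction via $\la^*(N-1)$, yet the asserted conclusion $N\la^*(N)-1\ge(N-2)(3\la^*(3)-1)$ involves only $\la^*(3)$, and it is never explained how $\la^*(N-1)$ drops out. What is needed to close the recursion is a quantitative estimate of the form $\sum_k\nu(g_k^2)\le\nu(f^2)+c_N\,\mathcal{E}_N(f)$; since the one-site step is already saturated by the eigenfunction $\sum_i\eta_i^4$, consistency with the exact value $\la^*(N)=\frac{N+2}{4N}$ forces $c_N=\frac{12N}{(N+1)(N+2)}$ with equality on that eigenfunction, so nothing lossy can work. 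In \cite{Ca08} this missing ingredient is supplied by a separate analysis of the off-diagonal correlations $\nu(g_kg_l)$, $k\ne l$, not by a weighted sum of conditional Poincar\'e inequalities. As written, your proposal reduces the theorem to an unproved claim that carries essentially all of its difficulty.
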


Now, we introduce the local version of the Kac walk.
Fix $d \in \N$ and let $\La_N$ the $d$-dimensional cube of linear size $N$ : $\La_N=\{1,2, \cdots ,N\}^d$.
The local version of the Kac walk is the $\nu=\nu_{|\La_N|,\omega}=\nu_{N^d,\omega}$-reversible Markov process on $S^{|\La_N|-1}(\omega)$ with infinitesimal generator given by 
\begin{equation}\label{eq:genenn}
\mathcal{L}^{*,loc}f(\eta)=\frac{1}{2}\sum_{x \in \La_N} \sum_{\substack{y \in \La_N \\ \|x-y\|=1}}D_{x,y}f(\eta)
\end{equation}
where $\|x-y\|=\sum_{i=1}^d |x_i - y_i|$.
We define the spectral gap $\lambda^{*,loc}(N,\omega)$ by (\ref{eq:sg}) with $-\mathcal{L}^*$ replaced by $-\mathcal{L}^{*,loc}$, and $\la^{*,loc}(N):=\displaystyle \inf_{\omega>0}\la^{*,loc}(N, \omega)$. It is also easy to see that $\la^{*,loc}(N)=\la^{*,loc}(N, \omega)$ for all $\omega>0$.

We give a comparison theorem for $\la^{*,loc}(N)$ and $\la^{*}(N)$.
\begin{theorem}\label{thm:nn}
  \begin{equation*}
\la^{*,loc}(N) \ge \frac{1}{96 d N^2}\la^*(|\La_N|).
  \end{equation*}
In particular, since $\la^*(|\La_N|) \ge \frac{1}{4}$ for all $N \ge 2$ by (\ref{eq:exactvalue}),  
\begin{equation}\label{eq:nnbound}
\la^{*,loc}(N) \ge \frac{1}{ 384d N^2 }. 
\end{equation}
\end{theorem}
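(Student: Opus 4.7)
The plan is a Dirichlet-form comparison. Since $\la^*(|\La_N|)\nu(f^2)\le -\nu(f\L^*f)$ for every mean-zero $f$, it suffices to exhibit an absolute constant $C$ with
\begin{equation*}
-\nu(f\L^*f)\;\le\; C\, d N^2\,\bigl(-\nu(f\L^{*,loc}f)\bigr).
\end{equation*}
Setting $\D^{xy}(f):=\frac{1}{2\pi}\int\nu[(f(R^{xy}_\theta\eta)-f(\eta))^2]d\theta$, this reduces to controlling each $\D^{xy}(f)$ by a weighted sum of $\D^e(f)$ over nearest-neighbor edges $e$.

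The key algebraic ingredient is the identity
\begin{equation*}
R^{ij}_\theta \;=\; R^{jk}_{-\pi/2}\,R^{ik}_{-\theta}\,R^{jk}_{\pi/2}
\end{equation*}
for any three distinct indices, verified by a direct $2\times 2$ computation. Iterating it along a monotone nearest-neighbor path $x=z_0,z_1,\ldots,z_L=y$ with $L=\|x-y\|\le dN$, one obtains an exact factorization $R^{xy}_\theta=Q_1\cdots Q_{2L-1}$ in which the single middle factor is $R^{\{z_0,z_1\}}_{\pm\theta}$ and each of the $2(L-1)$ other factors is a fixed-angle $\pm\pi/2$ nearest-neighbor rotation, with each edge $\{z_{k-1},z_k\}$ for $k\ge 2$ used exactly twice.

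Telescoping gives $f(R^{xy}_\theta\eta)-f(\eta)=\sum_{m=1}^{2L-1}[f(Q_m\xi^{(m)})-f(\xi^{(m)})]$ with $\xi^{(m)}:=Q_1\cdots Q_{m-1}\eta$. Applying Cauchy--Schwarz with $2L-1$ terms and using the $\nu$-invariance of each $Q_m$, each fixed-angle piece can be controlled via the elementary bound
\begin{equation*}
\nu\bigl[(f(R^e_{\theta_0}\eta)-f(\eta))^2\bigr]\;\le\; 2\,\D^e(f),
\end{equation*}
which follows from writing $f=\bar f_e+g$ (with $\bar f_e$ the $R^e$-average of $f$) and applying $(a-b)^2\le 2a^2+2b^2$; the single $\theta$-dependent factor integrates to $\D^{\{z_0,z_1\}}(f)$. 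The outcome is $\D^{xy}(f)\le C_1\, L\sum_{k=1}^L\D^{\{z_{k-1},z_k\}}(f)$.

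Finally, summing over all ordered pairs and swapping the order of summation, one bounds, for each nearest-neighbor edge $e$, the number of pairs whose monotone path passes through $e$: a direct count gives $2x_r(N-x_r)N^{d-1}\le \tfrac12 N^{d+1}$ when $e$ sits at transverse coordinate $x_r$, which combined with $L\le dN$ yields $\sum_{x,y\in\La_N}\D^{xy}(f)\le C_2\, d N^{d+2}\sum_{e}\D^e(f)$. Incorporating the $\frac{1}{2|\La_N|}$ normalization of $\L^*$ and the $\frac{1}{2}$ of $\L^{*,loc}$ delivers the factor $dN^2$ in the comparison. I expect the main obstacle to be setting up and iterating the rotation identity with the correct signs; the Cauchy--Schwarz step, the fixed-angle comparison, and the path-counting are then essentially routine.
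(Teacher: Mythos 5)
Your argument is correct and follows the same canonical-path skeleton as the paper's proof (reduce the long-range term $\nu((D_{x,y}f)^2)$ to nearest-neighbor data along a monotone path, apply Cauchy--Schwarz with the path length, then count paths through each edge), but the mechanism by which you move the particle is genuinely different. The paper works at the level of the averaging operators: Lemma \ref{lem:longrange} rests on the identity $E_{x,y}f=\pi_{x,z}(E_{z,y}(\pi_{x,z}f))$ for coordinate-exchange maps, and Lemma \ref{lem:exchange} gives $\nu((\pi_{x,y}f-f)^2)\le 4\nu((D_{x,y}f)^2)$; only the exchangeability of $\nu$ and the fact that $E_{x,y}=\nu[\,\cdot\,|\F_{x,y}]$ is a conditional expectation are used, which is exactly why the identical proof is recycled verbatim for the abstract Theorem \ref{thm:nngeneral}. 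You instead exploit the rotation structure of the Kac walk itself, conjugating $R^{ik}_{-\theta}$ by $R^{jk}_{\pm\pi/2}$; since a $\pi/2$ rotation is a signed coordinate swap, your fixed-angle estimate $\nu[(f(R^{e}_{\pm\pi/2}\eta)-f(\eta))^2]\le 4\nu((D_{e}f)^2)$ is the precise analogue of Lemma \ref{lem:exchange} and is proved the same way. What the paper's formulation buys is portability to the general setting of Section 2, where there are no rotations; what yours buys is a more concrete argument for this specific model, and your constants (roughly $8\,dN^2$ in the Dirichlet-form comparison) actually come out better than $96\,dN^2$, so the stated bound follows a fortiori. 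Two small repairs are needed: the telescoping variable should be $\xi^{(m)}=Q_{m+1}\cdots Q_{2L-1}\eta$, so that $\xi^{(m-1)}=Q_m\xi^{(m)}$ and the sum collapses to $f(Q_1\cdots Q_{2L-1}\eta)-f(\eta)$, rather than $Q_1\cdots Q_{m-1}\eta$; and you should state explicitly that $\nu$, being the uniform measure on the sphere, is invariant under each orthogonal map $Q_m$, which is what licenses the change of variables after Cauchy--Schwarz.
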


In the proof, we use the invariance of $\nu$ under the exchange of coordinates repeatedly, and the idea of \lq\lq moving particle lemma" which was developed for the study of the spectral gap of interacting particle systems with discrete spins (cf. \cite{NS}).
Generally, it is not easy to show the estimate corresponding to \lq\lq moving particle lemma" for the systems with continuous spins. However, for the generator of the form (\ref{eq:genenn}), 
we show that the estimate can be established. A proof of Theorem \ref{thm:nn} is given in the next subsection.

Next, we consider a generalization of Kac walk introduced in \cite{CCL} by Carlen et al.
Let $\rho(\theta)$ be a probability density on the circle, i.e.
 \begin{equation*}
\int_{-\pi}^{\pi} \rho(\theta)=1.
  \end{equation*}
Consider a $\nu=\nu_{N,\omega}$-reversible Markov process on $S^{N-1}(\omega)$ with infinitesimal generator given by 
\begin{equation*}
\mathcal{L}f(\eta)=\frac{1}{2N}\sum_{i,j=1}^N  \int_{-\pi}^{\pi} [ f(R^{ij}_{\theta}\eta)-f(\eta)] \rho(\theta) d\theta.
\end{equation*}
We define the spectral gap $\lambda(N,\omega)$ by (\ref{eq:sg}) with $-\mathcal{L}^*$ replaced by $-\mathcal{L}$ and $\la(N):=\displaystyle \inf_{\omega>0}\la(N, \omega)$. For this generalization, $\la(N)=\la(N, \omega)$ holds for any $\omega>0$ again, since $\mathcal{L}$ commutes with the unitary change of scale from $S^{N-1}(\omega)$ to $S^{N-1}(\omega')$ , for any $\omega, \omega' >0$. Indeed, $\nu_{N,\omega'}$ is the image of $\nu_{N,\omega}$ under the map $T : \eta \to \frac{\sqrt{\omega'} \eta}{\sqrt{\omega}}$ and if $f_{T}(\eta)=f(T\eta)$, then
\begin{equation}
\nu_{N,\omega} (f_T (-\mathcal{L})f_T)= \nu_{N,\omega'} (f (-\mathcal{L})f)
\end{equation}
holds.
Note that, to guarantee $\la(N) > 0$, we need some more assumptions on $\rho$.

We introduce the local version of this generalized Kac walk described by the infinitesimal generator 
\begin{equation*}
\mathcal{L}^{loc}f(\eta)=\frac{1}{2}\sum_{x \in \La_N} \sum_{\substack{y \in \La_N \\ \|x-y\|=1}} \int_{-\pi}^{\pi} [ f(R^{xy}_{\theta}\eta)-f(\eta)] \rho(\theta) d\theta
\end{equation*}
and define the spectral gap $\lambda^{loc}(N,\omega)$ and $\lambda^{loc}(N)$, which satisfying $\lambda^{loc}(N)=\lambda^{loc}(N,\omega)$ for all $\omega>0$, in the same manner as before.
In \cite{CCL}, under the assumption that $\rho(\theta)$ is continuous and $\rho(0) >0$, it is shown that 
\begin{equation}
\la(N) \ge \la(2)\frac{N+2}{2N}, \quad N \ge 2. \label{eq:boundgeneral}
\end{equation}
Under their assumption on $\rho(\theta)$, it is also proved that $\la(2)>0$ and therefore $\la(N) >0$.

Our next result shows that the proof of (\ref{eq:boundgeneral}) can be somewhat simplified, and we also have a lower bound on $\lambda^{loc}(N)$. Note that since we only assume that $\rho(\theta)$ is a probability density on the circle, $\la(2)$ is not necessarily positive.
\begin{theorem}\label{thm:comparison}
  \begin{equation*}
\la(N) \ge 2\la(2) \la^*(N),  \quad \la^{loc}(N) \ge 2\la(2) \la^{*,loc}(N).
  \end{equation*}
In particular, with (\ref{eq:exactvalue}), we have (\ref{eq:boundgeneral}) and with (\ref{eq:nnbound}), we have 
\begin{equation*}
\la^{loc}(N) \ge \la(2) \frac{1}{192dN^2}.
\end{equation*}
\end{theorem}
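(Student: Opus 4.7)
The plan is to reduce both inequalities to a pair-by-pair comparison of Dirichlet forms, then invoke the spectral gap of the uniform-rotation process. Set $\mathcal{E}^*_{i,j}(f,f):=-\nu(fD^*_{i,j}f)$ and $\mathcal{E}_{i,j}(f,f):=-\nu(fD_{i,j}f)$. Because $\mathcal{L}$ and $\mathcal{L}^*$ are identical linear combinations of the pair operators $D_{i,j}$ and $D^*_{i,j}$ (and similarly for $\mathcal{L}^{loc}$ and $\mathcal{L}^{*,loc}$, the sums now restricted to nearest-neighbor pairs), both assertions reduce to the pointwise comparison
\[
\mathcal{E}^*_{i,j}(f,f) \;\le\; \frac{1}{2\la(2)}\,\mathcal{E}_{i,j}(f,f)
\]
for every pair $(i,j)$ (resp.\ $(x,y)$ with $\|x-y\|=1$). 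Summing this over the relevant pairs and using $\la^*(N)\,\nu((f-\nu(f))^2)\le \mathcal{E}^*(f,f)$ (resp.\ its local version) then yields the theorem.

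To prove the pair bound I would condition on $(\eta_k)_{k\neq i,j}$. On such a slice the pair $(\eta_i,\eta_j)$ is uniform on the circle $\{\eta_i^2+\eta_j^2=r^2\}$ with $r=\sqrt{\omega-\sum_{k\neq i,j}\eta_k^2}$; introduce an angular coordinate $\phi$ so that $R^{ij}_\theta$ becomes $\phi\mapsto\phi-\theta$, and set $g(\phi):=f(\eta)$. A short computation, exploiting rotation-invariance of the uniform measure on the circle for symmetrization, gives
\[
-E\bigl[f D^*_{i,j}f \,\big|\, (\eta_k)_{k\neq i,j}\bigr] \;=\; \operatorname{Var}(g),
\]
\[
-E\bigl[f D_{i,j}f \,\big|\, (\eta_k)_{k\neq i,j}\bigr] \;=\; \frac{1}{2}\!\int_{-\pi}^{\pi}\!\!\int_{-\pi}^{\pi}\! [g(\phi-\theta)-g(\phi)]^2 \tilde\rho(\theta)\,d\theta\,\frac{d\phi}{2\pi},
\]
where $\tilde\rho(\theta):=\tfrac12(\rho(\theta)+\rho(-\theta))$ is the symmetric part of $\rho$ (arising because $\nu$ is invariant under $\theta\mapsto -\theta$), and $\operatorname{Var}(g)$ is taken with respect to the uniform measure on $S^1(r)$.

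The key step is to recognize that on the slice the restricted $\mathcal{L}$-dynamics is exactly the generalized Kac walk with $N=2$ on $S^1(r)$. Indeed, $D_{1,1}$ and $D_{2,2}$ vanish since $R^{ii}_\theta=\text{Id}$, and because $R^{21}_\theta=R^{12}_{-\theta}$ the off-diagonal terms combine into $D_{1,2}+D_{2,1}=2\tilde A$ with $\tilde A g(\phi):=\int[g(\phi-\theta)-g(\phi)]\tilde\rho(\theta)d\theta$; hence the $N=2$ generator equals $\tfrac12\tilde A$. Since $\la(2)$ does not depend on $\omega$, the spectral-gap inequality for $N=2$ reads
\[
\la(2)\operatorname{Var}(g) \;\le\; \frac14 \!\int_{-\pi}^{\pi}\!\!\int_{-\pi}^{\pi} [g(\phi-\theta)-g(\phi)]^2 \tilde\rho(\theta)\,d\theta\,\frac{d\phi}{2\pi} \;=\; \frac12\bigl(-E[fD_{i,j}f\,|\,(\eta_k)_{k\neq i,j}]\bigr),
\]
and substituting back, then taking expectations over $(\eta_k)_{k\neq i,j}$, delivers the desired pair comparison. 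The local version of the theorem requires no new argument, as exactly the same reasoning applies to each nearest-neighbor pair in $\mathcal{L}^{loc}$ and $\mathcal{L}^{*,loc}$. The main subtlety will simply be the careful bookkeeping of the factors $\tfrac12$ and $\tfrac14$ through the symmetrization and the identification of the slice dynamics with the $N=2$ process; once that is in hand, the rest is routine.
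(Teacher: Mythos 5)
Your proposal is correct and follows essentially the same route as the paper: condition on $\F_{i,j}$, identify the conditional dynamics of the pair $(\eta_i,\eta_j)$ with the $N=2$ process on the circle, use the $N=2$ spectral gap to bound the conditional variance (which is exactly $-\nu(fD_{i,j}f\,|\,\F_{i,j})$ for the simple-average operator) by the conditional Dirichlet form, then take expectations and sum over pairs. The paper phrases this more abstractly via the operator $\mathcal{L}_0$ and the functions $f^{i,j}_{\eta}$ rather than your explicit angular parametrization and symmetrization in $\tilde\rho$, but the mechanism and the bookkeeping of the factor $2$ (from $\mathcal{L}=\tfrac12\mathcal{L}_0$ at $N=2$) are identical.
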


In the next result, we also give an upper bound of $\lambda(N)$ and $\lambda^{loc}(N)$. Denote the supremum of the spectral of $-\mathcal{L}$ for $N=2$ by $\kappa$:
\begin{equation*}
\kappa:=\sup_{\omega>0} \sup\Big\{ \frac{ \nu_{2,\omega}(f(-\mathcal{L})f)}{\nu_{2,\omega}(f^2)}  \Big| \nu_{2,\omega}(f)=0, \ f \in L^2(\nu_{2,\omega}) \Big\}.
\end{equation*}
\begin{theorem}\label{thm:comparisonupper}
  \begin{equation*}
\la(N) \le 2 \kappa \la^*(N),  \quad \la^{loc}(N) \le  2 \kappa \la^{*,loc}(N).
  \end{equation*}
In particular, since $\kappa \le 1$, we have $\la(N) \le 2 \la^*(N)$ and $\la^{loc}(N) \le  2 \la^{*,loc}(N)$.
\end{theorem}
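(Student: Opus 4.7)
My approach is to compare the Dirichlet forms of $-\mathcal{L}$ and $-\mathcal{L}^*$ (and of their local counterparts) pair by pair, using $\k$ as the two-particle comparison constant, and then invoke the variational characterization (\ref{eq:sg}). The structural fact enabling this reduction is that $\mathcal{L}$ and $\mathcal{L}^*$ decompose as weighted sums over pairs $(i,j)$ of one-pair operators $R^{ij}_\rho f := \int [f(R^{ij}_\theta \eta) - f(\eta)]\rho(\theta)d\theta$ and $D_{ij}$ respectively, each acting only on the coordinates $(\eta_i, \eta_j)$; the two generators differ only in the angular distribution at each pair.

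First I establish the inequality at the two-particle level: for every $\om > 0$ and every $f \in L^2(\nu_{2,\om})$,
$$\nu_{2,\om}(f(-\mathcal{L})f) \le 2\k\,\nu_{2,\om}(f(-\mathcal{L}^*)f).$$
For $N=2$, since a uniform $\theta$ rotates $\eta$ uniformly over $S^1(\sqrt{\om})$, one has $D_{1,2}f = \nu_{2,\om}(f) - f$, and together with $D_{1,2}=D_{2,1}$ and $D_{i,i}=0$ this gives $-\mathcal{L}^*|_{N=2} = \frac{1}{2}(I - P_{\nu_{2,\om}})$, hence $\nu_{2,\om}(f(-\mathcal{L}^*)f) = \frac{1}{2}(\nu_{2,\om}(f^2) - \nu_{2,\om}(f)^2)$. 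The definition of $\k$ yields $\nu_{2,\om}(f(-\mathcal{L})f) \le \k(\nu_{2,\om}(f^2) - \nu_{2,\om}(f)^2)$ for all $f$ (trivial on constants, extended to general $f$ by subtracting the mean), and combining the two gives the desired two-particle bound.

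Now I lift to general $N$ by conditioning. For each pair $(i,j)$ with $i \ne j$, under $\nu_{N,\om}$ the conditional law of $(\eta_i, \eta_j)$ given $\{\eta_k : k \ne i,j\}$ is uniform on the circle of radius $\sqrt{\om_{ij}}$ with $\om_{ij} := \om - \sum_{k \ne i,j}\eta_k^2$, i.e.\ it coincides with $\nu_{2,\om_{ij}}$. Since $R^{ij}_\rho$ and $D_{ij}$ affect only the $(i,j)$-coordinates, the two-particle inequality applies pointwise in the remaining coordinates; integrating yields
$$\nu_{N,\om}(f(-R^{ij}_\rho)f) \le 2\k\,\nu_{N,\om}(f(-D_{ij})f) \qquad \text{for every pair }(i,j).$$
Summing with weight $\frac{1}{2N}$ over all $(i,j)$ produces $\nu(f(-\mathcal{L})f) \le 2\k\,\nu(f(-\mathcal{L}^*)f)$; the identical argument restricted to nearest-neighbor pairs in $\La_N$ (with weight $\frac{1}{2}$) yields $\nu(f(-\mathcal{L}^{loc})f) \le 2\k\,\nu(f(-\mathcal{L}^{*,loc})f)$. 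Dividing by $\nu(f^2)$ and taking the infimum over $f$ with $\nu(f)=0$ in (\ref{eq:sg}) gives $\la(N) \le 2\k\la^*(N)$ and $\la^{loc}(N) \le 2\k\la^{*,loc}(N)$.

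For the addendum $\k \le 1$: writing $\nu_{2,\om}(f(-\mathcal{L})f) = \frac{1}{4}\int \bar\rho(\theta)\,\nu_{2,\om}((f - f\circ R^{12}_\theta)^2)d\theta$ with $\bar\rho(\theta) = (\rho(\theta)+\rho(-\theta))/2$, the bound $(a-b)^2 \le 2a^2 + 2b^2$ together with the $\nu$-invariance of $R^{12}_\theta$ gives $\le \nu_{2,\om}(f^2)$, so $\k \le 1$. I expect no real obstacle; the only thing requiring care is the identification in Step~2 of the conditional law on $S^1(\sqrt{\om_{ij}})$ as $\nu_{2,\om_{ij}}$ --- a standard property of the uniform measure on the sphere under conditioning on fixed sums of squares.
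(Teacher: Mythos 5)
Your proposal is correct and follows essentially the same route as the paper: a two-particle comparison between $-\mathcal{L}$ and the projection $\tfrac12(I-P_{\nu_{2,\omega}})$ via the definition of $\kappa$, lifted to general $N$ by conditioning on $\{\eta_k : k\neq i,j\}$ (the paper packages this as the identity $\nu(f(-\mathcal{L}_{i,j})f)=\nu(\nu_{2,\eta_i^2+\eta_j^2}(f^{i,j}_\eta(-\mathcal{L}_0)f^{i,j}_\eta))$), then summed over pairs, with the same Schwarz-inequality computation for $\kappa\le 1$. No gaps.
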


The key of the proofs of the above comparison theorems is the fact that $\nu((D_{i,j}f)^2)$ is the expectation of the variance of $f$ with respect to $\nu(\cdot | \F_{i,j})$ where $\F_{i,j}$ is the sigma algebra generated by the coordinates $\{ \eta_k ; k \neq i,j \}$, and therefore this variance can be estimated by the term of $\la(2)$ or $\kappa$ and the corresponding Dirichlet form. Proofs of Theorem \ref{thm:comparison} and Theorem \ref{thm:comparisonupper} are given in the subsection 1.2. 

In section 2, we shall show that variants of the same methods can be used to obtain spectral gap estimates for several models sharing some of the features of the Kac walk or the generalization of Kac walk.
In section 3, we give two examples of such processes.

\subsection{Proof of Theorem \ref{thm:nn}} 

We first introduce operators $E_{i,j}$ appearing in the definition of $\mathcal{L}^*$ and $\pi_{i,j}$ which represents the exchange of the velocity of particles $i$ and $j$:
\begin{equation*}  
 E_{i,j}f (\eta) =\frac{1}{2\pi} \int^{\pi}_{-\pi} f(R^{ij}_{\theta} \eta) d \theta,
\quad 
 \pi_{i,j}f(\eta)=f(\pi_{i,j}\eta)
 \end{equation*}
where
\begin{equation*}  
 (\pi_{i,j} \eta )_k = \begin{cases}
	\eta_k  & \text{if $k \neq i,j$,} \\
	\eta_j & \text{if $k=i$,} \\
	\eta_i  & \text{if $k=j.$} 
\end{cases}
\end{equation*}
As a convention, we take $\pi_{i,i}\eta=\eta$. Note that $E_{i,j}$ is a projection which coincides with $\nu$-conditional expectation given $\sigma$-algebra $\mathcal{F}_{i,j}$ generated by variables $\{ \eta_k ; k \neq i,j\}$. In other words, $E_{i,j}f=\nu(f| \F_{i,j})$ is an average of $f$ on the $(\eta_i,\eta_j)$ plane with respect to $\nu$. Therefore, we regard this model as a binary collision process given by simple averages. Note that, by the definition $D_{i,j}=E_{i,j}-\text{Id}$.

To compare the Dirichlet form with respect to the long range operators with that of the local operators, we first prepare preliminary lemmas.
\begin{lemma}\label{lem:longrange}
For any $x, y$ and $z \in \La_N$ satisfying $y \neq z$,
\[
\nu((D_{x,y}f)^2) \le 6 \nu((\pi_{x,z}f-f)^2)+ 3\nu((D_{z,y}f)^2)
\]
for all $f \in L^2(\nu)$.
\end{lemma}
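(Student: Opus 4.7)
The plan is to exploit a conjugation identity that turns the long-range collision operator $D_{x,y}$ into $D_{z,y}$ via the coordinate swap $\pi_{x,z}$, and then to control the mismatch between $\pi_{x,z}f$ and $f$ using the projection property of $E_{z,y}$. This mirrors the ``moving particle'' philosophy: we relocate the index $x$ to $z$ via the swap, execute the collision at the new pair $(z,y)$, and pay an error equal to the cost of the swap.

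First I would verify the pointwise identity $\pi_{x,z} R^{xy}_{\theta} \pi_{x,z} = R^{zy}_{\theta}$ by a direct computation on coordinates: swap $x\leftrightarrow z$, rotate in the $(x,y)$-plane, and swap back; inspecting the three nontrivial slots shows the net effect is exactly rotation in the $(z,y)$-plane. Integrating in $\theta$ gives $\pi_{x,z} E_{x,y}\pi_{x,z} = E_{z,y}$, and since $\pi_{x,z}^2=\text{Id}$,
\[
D_{x,y}f \;=\; \pi_{x,z}\,D_{z,y}(\pi_{x,z}f).
\]
The hypothesis $y\neq z$ is essential here: when $z=y$ the same computation produces $R^{xy}_{-\theta}$ instead of $R^{zy}_{\theta}$, so the conjugation identity collapses and no useful replacement is obtained.

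Second, since $\nu$ is uniform on the sphere it is invariant under coordinate permutations, so $\nu((D_{x,y}f)^2) = \nu((D_{z,y}(\pi_{x,z}f))^2)$. Writing $\pi_{x,z}f = f + h$ with $h := \pi_{x,z}f - f$ and splitting $D_{z,y}(\pi_{x,z}f) = D_{z,y}f + D_{z,y}h$, the elementary bound $(a+b)^2 \le 2a^2+2b^2$ gives
\[
\nu((D_{x,y}f)^2) \;\le\; 2\,\nu((D_{z,y}f)^2) + 2\,\nu((D_{z,y}h)^2).
\]

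Third, I would bound the error term using that $E_{z,y}$ is the $\nu$-conditional expectation given $\F_{z,y}$ and therefore a self-adjoint projection in $L^2(\nu)$. Expanding and using $\nu(h\cdot E_{z,y}h) = \nu((E_{z,y}h)^2)$ yields $\nu((D_{z,y}h)^2) = \nu(h^2) - \nu((E_{z,y}h)^2) \le \nu(h^2) = \nu((\pi_{x,z}f - f)^2)$. Combining the three steps produces the claimed inequality (in fact with the slightly tighter constants $(2,2)$, which of course imply $(6,3)$). The only real obstacle is the conjugation identity in Step 1; after that the argument is pure Hilbert-space bookkeeping using the invariance of $\nu$ and the projection property of $E_{z,y}$.
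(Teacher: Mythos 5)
Your proof is correct and rests on the same key identity as the paper's, namely the conjugation $E_{x,y}=\pi_{x,z}\,E_{z,y}\,\pi_{x,z}$ (equivalently $D_{x,y}f=\pi_{x,z}\,D_{z,y}(\pi_{x,z}f)$) combined with the permutation invariance of $\nu$. Where you differ is in the subsequent bookkeeping: the paper inserts two intermediate terms and applies the three-term Schwarz bound $(a+b+c)^2\le 3(a^2+b^2+c^2)$, estimating the piece $E_{z,y}(\pi_{x,z}f-f)$ by Jensen's inequality, whereas you split off only $D_{z,y}f$ and absorb the whole error $D_{z,y}(\pi_{x,z}f-f)$ in one stroke via the exact identity $\nu((D_{z,y}h)^2)=\nu(h^2)-\nu((E_{z,y}h)^2)$ valid because $E_{z,y}$ is an orthogonal projection. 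This buys you the tighter constants $(2,2)$ in place of $(6,3)$; the gain is only at the level of the numerical prefactor in Theorem \ref{thm:nn} (it would lower $96$ but not change the order $dN^2$), so both versions serve equally well. One small point to add: your conjugation identity requires $x\neq y$ (when $x=y$ one has $R^{xy}_{\theta}=\mathrm{Id}$, hence $D_{x,y}=0$, while $\pi_{x,z}D_{z,y}\pi_{x,z}$ need not vanish), so that degenerate case should be disposed of separately at the outset, as the paper does; it is immediate there since the left-hand side of the claimed inequality is then zero.
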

\begin{proof}
If $x = y$, then $ \nu((D_{x,y}f)^2)=0$, so the inequality obviously holds.
On the other hand, if $x \neq y$ and $y \neq z$, then for any $\eta$,
\[
E_{x,y}f(\eta)=\pi_{x,z} (E_{z,y} (\pi_{x,z}f)) (\eta) = (E_{z,y} (\pi_{x,z}f)) (\pi_{x,z} \eta) .
\]
Therefore, by Schwarz's inequality and change of variables, we have
\begin{align*}
\nu & ((D_{x,y}f)^2)  =\nu((E_{x,y}f-f)^2) = \nu (\{(E_{z,y} (\pi_{x,z}f)) ( \pi_{x,z}\eta) -f (\eta) \}^2) \\
& = \nu (\{(E_{z,y} (\pi_{x,z}f)) (\eta) - (E_{z,y}f) ( \eta) + (E_{z,y}f) ( \eta) -f( \eta ) + f (\eta) - f (\pi_{x,z}\eta) \}^2) \\
& \le 3 \nu (\{(E_{z,y} (\pi_{x,z}f)) (\eta) -(E_{z,y}f) (\eta) \}^2) +3\nu((D_{z,y}f)^2)+ 3\nu((\pi_{x,z}f-f)^2). 
\end{align*}
Finally, since $\nu (\{(E_{z,y} (\pi_{x,z}f)) (\eta) -(E_{z,y}f) (\eta) \}^2)= \nu (\{E_{z,y} (\pi_{x,z}f-f) \}^2) \le \nu(E_{z,y} (\pi_{x,z}f-f)^2)= \nu((\pi_{x,z}f-f)^2 )$, we complete the proof.
\end{proof}

\begin{lemma}\label{lem:exchange}
For any $x, y \in \La_N$,
\[
\nu((\pi_{x,y}f-f)^2) \le 4 \nu((D_{x,y}f)^2).
\]
\end{lemma}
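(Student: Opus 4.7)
The plan is to bound $\pi_{x,y}f-f$ by inserting $E_{x,y}f$ as an intermediate term, using the two basic facts that $\nu$ is invariant under the coordinate exchange $\pi_{x,y}$ and that the conditional expectation $E_{x,y}f=\nu(f\mid \mathcal{F}_{x,y})$ is itself invariant under $\pi_{x,y}$.

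The key observation is the invariance $\pi_{x,y}(E_{x,y}f)=E_{x,y}f$. Indeed, as noted in the text, $E_{x,y}f(\eta)$ is the average of $f$ over the circle in the $(\eta_x,\eta_y)$ plane (with the other coordinates fixed), so it depends on $(\eta_x,\eta_y)$ only through $\eta_x^2+\eta_y^2$; swapping $\eta_x$ and $\eta_y$ leaves this quantity unchanged. Equivalently, the uniform law of $(\eta_x,\eta_y)$ on the circle of radius $\sqrt{\omega-\sum_{k\neq x,y}\eta_k^2}$ is invariant under the reflection that swaps the two coordinates.

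Given this, I would write
\[
\pi_{x,y}f-f=(\pi_{x,y}f-\pi_{x,y}(E_{x,y}f))+(E_{x,y}f-f)=\pi_{x,y}(f-E_{x,y}f)+(E_{x,y}f-f),
\]
apply the elementary inequality $(a+b)^2\le 2a^2+2b^2$, and integrate against $\nu$ to get
\[
\nu((\pi_{x,y}f-f)^2)\le 2\nu\bigl((\pi_{x,y}(f-E_{x,y}f))^2\bigr)+2\nu((E_{x,y}f-f)^2).
\]
Since $\nu$ is invariant under $\pi_{x,y}$ (a basic symmetry of the uniform measure on the sphere), the first term on the right equals $2\nu((f-E_{x,y}f)^2)=2\nu((D_{x,y}f)^2)$, and the second term is also $2\nu((D_{x,y}f)^2)$ by the identity $D_{x,y}f=E_{x,y}f-f$. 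Summing gives the claimed bound $\nu((\pi_{x,y}f-f)^2)\le 4\nu((D_{x,y}f)^2)$.

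There is no real obstacle; the only thing to check carefully is the symmetry of $E_{x,y}f$ under $\pi_{x,y}$, which is what allows us to conjugate the exchange $\pi_{x,y}$ past the $E_{x,y}$-average at no cost. (The case $x=y$ is trivial, since both sides vanish.)
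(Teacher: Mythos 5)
Your proof is correct and is essentially identical to the paper's: both insert $E_{x,y}f$ as the intermediate term, use the symmetry $E_{x,y}f(\pi_{x,y}\eta)=E_{x,y}f(\eta)$ together with the $\pi_{x,y}$-invariance of $\nu$, and conclude via $(a+b)^2\le 2a^2+2b^2$. Your write-up just spells out the steps that the paper compresses into one line.
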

\begin{proof}
Since $E_{x,y}f(\eta)=E_{x,y}f(\pi_{x,y}\eta)$, by Schwarz's inequality, we have
\[
\nu((\pi_{x,y}f-f)^2) = \nu((\pi_{x,y}f-E_{x,y}f+E_{x,y}f -f)^2) \le 4 \nu((D_{x,y}f)^2).
\]
\end{proof}

\begin{proof}[Proof of Theorem \ref{thm:nn}]
For each pair $x, y \in \La_N \ (x \neq y)$, choose a canonical path $\Gamma(x,y)=(x=z_0,z_1, \cdots, z_{n(x,y)}=y)$ where $n(x,y) \in \N$ and $\|z_i-z_{i+1}\|=1$ for $0 \le i \le n(x,y)-1$ by moving first in the first coordinate direction, then in the second coordinate direction, and so on.
Then, by Lemma \ref{lem:longrange}, we have
\begin{equation}\label{eq:1}
\nu((D_{x,y}f)^2) \le 6 \nu((\pi_{x,z_{n(x,y)-1}}f-f)^2)+ 3\nu((D_{z_{n(x,y)-1,y}}f)^2).
\end{equation}

On the other hand, since 
\begin{align*}
 & \pi_{x,z_{n(x,y)-1}}  = \\
& \pi_{z_0,z_1} \circ \pi_{z_1,z_2} \circ \cdots \pi_{z_{n(x,y)-3},z_{n(x,y)-2}} \circ \pi_{z_{n(x,y)-2},z_{n(x,y)-1}} \circ \pi_{z_{n(x,y)-3},z_{n(x,y)-2}} 
 \cdots \circ \pi_{z_1,z_2}  \circ \pi_{z_0,z_1},
\end{align*}
by Schwarz's inequality
\begin{equation}\label{eq:2}
\nu((\pi_{x,z_{n(x,y)-1}}f-f)^2) \le  4 n(x,y) \sum_{i=0}^{n(x,y)-2} \nu((\pi_{z_i,z_{i+1}}f-f)^2).
\end{equation}
Therefore, combining the inequalities (\ref{eq:1}), (\ref{eq:2}) and Lemma \ref{lem:exchange}, we have
\[
\nu((D_{x,y}f)^2) \le 96 \  n(x,y) \sum_{i=0}^{n(x,y)-1}  \nu((D_{z_i,z_{i+1}}f)^2).
\]
Then, by the construction of canonical paths,
\begin{align*}
\nu(f (-\mathcal{L}^*)f) &= \frac{1}{|\La_N|}\sum_{x,y \in \La_N}\nu((D_{x,y}f)^2) \le 96 d N \frac{1}{|\La_N|}\sum_{x,y \in \La_N}  \sum_{i=0}^{n(x,y)-1}  \nu((D_{z_i,z_{i+1}}f)^2) \\
& \le 96dN^2  \sum_{\substack{x,y \in \La_N \\ \|x-y\|=1}} \nu((D_{x,y}f)^2) = 96dN^2 \nu(f (-\mathcal{L}^{*,loc})f ).
\end{align*}

\end{proof}

\begin{remark}
The key ideas of the proof of Theorem \ref{thm:nn}, Lemma \ref{lem:longrange} and \ref{lem:exchange} were exactly same as the ideas presented in Section 2.5 of
\cite{CCM}.
\end{remark}

\subsection{Proof of Theorem \ref{thm:comparison} and Theorem \ref{thm:comparisonupper}}
We define an operator $\mathcal{L}_0$ on $L^2(\nu_{2,\omega})$ as
\begin{equation*}
\mathcal{L}_0 f (\eta)= \frac{1}{2} \{ \int_{-\pi}^{\pi} [ f(R^{12}_{\theta}\eta)-f(\eta)] \rho(\theta) d\theta  + \int_{-\pi}^{\pi} [ f(R^{21}_{\theta}\eta)-f(\eta)] \rho(\theta) d\theta \}
\end{equation*}
where $\eta \in \R^2$. For $N \ge 3$, $\eta \in \R^N$, $1 \le i <j \le N$ and $f: \R^N \to \R$, define $f^{i,j}_{\eta} : \R^2 \to \R$ as
\begin{equation*}
f^{i,j}_{\eta}(p,q)=f(\eta_1,\eta_2, \cdots, \eta_{i-1}, p, \eta_{i+1}, \cdots, \eta_{j-1}, q, \eta_{j+1}, \cdots, \eta_N).
\end{equation*}
Then, we can rewrite the Markov generator as follows:
\begin{equation*}
\mathcal{L}f(\eta)=\frac{1}{N}\sum_{i < j}\mathcal{L}_{i,j} f(\eta)
\end{equation*}
where $\mathcal{L}_{i,j} f (\eta) = (\mathcal{L}_0 f^{i,j}_{\eta} ) (\eta_i,\eta_j)$. Note that $f^{i,j}_{\eta}$ does not depend on $\eta_i$ and $\eta_j$. Then, we have
\begin{equation}\label{eq:2-Nrelation}
\nu(f (-\mathcal{L}_{i,j})  f) = \nu(  f^{i,j}_{\eta} (\eta_i, \eta_j)  ((-\mathcal{L}_0) f^{i,j}_{\eta})(\eta_i, \eta_j) )  = \nu( \nu_{2, \eta_i^2+\eta_j^2}( f^{i,j}_{\eta} (-\mathcal{L}_0) f^{i,j}_{\eta}) ) .
\end{equation}
Note that for $N=2$, $\mathcal{L}=\frac{1}{2}\mathcal{L}_0$. Therefore, by definition, we have for any $\omega > 0$ and $g \in L^2(\nu_{2,\omega})$,
\[
2 \lambda(2) \nu_{2, \omega}( \{  g - \nu_{2, \omega} (g) \} ^2 ) \le \nu_{2, \omega}(  g (-\mathcal{L}_0) g  ) \le 2 \kappa \nu_{2, \omega}( \{  g - \nu_{2, \omega} (g) \} ^2).
\]
Since
\[
\nu(\nu_{2, \eta_i^2+\eta_j^2}( \{  f^{i,j}_{\eta} - \nu_{2, \eta_i^2+\eta_j^2} (f^{i,j}_{\eta}) \} ^2 )) = \nu( \{  f^{i,j}_{\eta} - \nu_{2, \eta_i^2+\eta_j^2} (f^{i,j}_{\eta}) \} ^2 )= \nu(\{f-E_{i,j}f\}^2),
\]
we have
\begin{equation*}
  2\lambda(2) \nu ((D_{i,j}f)^2 ) \le  \nu(f (-\mathcal{L}_{i,j})  f) \le 2\kappa \nu ((D_{i,j}f)^2).
\end{equation*}
Finally, it follows that
\begin{equation*}
  2\lambda(2) \nu (f (-\mathcal{L}^*)f) \le  \nu (f (-\mathcal{L})f) \le 2\kappa \nu (f (-\mathcal{L}^*)f)
\end{equation*}
and therefore $2\lambda(2) \lambda^*(N) \le \lambda(N) \le 2\kappa \lambda^*(N)$.
In the same way, $2\lambda(2) \lambda^{*,loc}(N) \le \lambda^{loc}(N) \le 2\kappa \lambda^{*,loc}(N)$ is shown.

Now, it remains to show that $\kappa \le 1$. This follows from this simple inequality obtained by Schwarz's inequality:
\begin{align*}
\nu_{2,\omega} (f (-\mathcal{L}) f ) & = \frac{1}{8}  \nu_{2,\omega} ( \int_{-\pi}^{\pi} [ f(R^{12}_{\theta}\eta)-f(\eta)]^2 (\rho(\theta)+\rho(-\theta)) d\theta ) \\
& \le \frac{1}{4}  \nu_{2,\omega} ( \int_{-\pi}^{\pi} [f(R^{12}_{\theta}\eta)^2 +f(\eta)^2] (\rho(\theta)+\rho(-\theta)) d\theta ) = \nu_{2,\omega}(f^2).
\end{align*}
We use only here the assumption that $\rho(\theta)$ is a probability density on the circle. 


\section{General setting}

The general setting can be described as follows. We consider a product space $\Omega=X^N$ where $X$, the single component space is a measurable space equipped with a probability measure $\mu$. On $\Omega$, we consider the product measure $\mu^N$. Elements of $\Omega$ will be denoted by $\eta=(\eta_1,\eta_2, \cdots, \eta_N)$. Next, we take a measurable function $\xi: X \to \R^m$, for a given $m \ge 1$, and we define the probability measure $\nu=\nu_{N,\omega}$ on $\Omega$ as $\mu^N$ conditioned on the event 
\begin{equation*}
\Omega_{N,\omega}:=\{\eta \in \Omega; \sum_{i=1}^N \xi(\eta_i)=\omega\},
\end{equation*} 
where $\omega \in \Theta_N$ is a given parameter and $\Theta_N:=\{ \sum_{i=1}^N \xi(\eta_i); \eta \in X^N\}$ . We interpret the constraint on $\Omega_{N,\omega}$ as a conservation law.

In all the examples considered below there are no difficulties in defining the conditional probability $\nu$, therefore we do not attempt here at a justification of this setting in full generality but rather refer to the examples for full rigor. As pointed out in \cite{Ca08}, the crucial property of $\nu$ is that, for any set of indices $A$, conditioned on the $\sigma$-algebra $\mathcal{F}_A$ generated by variables $\eta_i, i \notin A$, $\nu$ becomes the $\mu$-product law over $\eta_j, \ j \in A$, conditioned on the event
\begin{equation*}
\sum_{j \in A} \xi(\eta_j)=\omega - \sum_{i \notin A} \xi(\eta_i).
\end{equation*} 

We introduce some notations in analogy with the last section. For $N \ge 3$, $\eta \in X^N$, $1 \le i <j \le N$ and $f: X^N \to \R$, define $f^{i,j}_{\eta} : X^2 \to \R$ as
\begin{equation*}
f^{i,j}_{\eta}(p,q)=f(\eta_1,\eta_2, \cdots, \eta_{i-1}, p, \eta_{i+1}, \cdots, \eta_{j-1}, q, \eta_{j+1}, \cdots, \eta_N).
\end{equation*}
For each $\omega \in \Theta_2$, fix a well defined (possibly unbounded, with dense domain denoted by $\D(\L_0)$) nonnegative self-adjoint operator $\mathcal{L}_0=\mathcal{L}_{0}^{\omega}$ defined on $L^2(\nu_{2,\omega})$ satisfying $\L_0 f=0$ if $f$ is a constant function. We are interested in the process on $\Omega_{N,\omega}$ described by the infinitesimal generator
\begin{equation}\label{eq:gene}
\mathcal{L}f(\eta)=\frac{1}{N}\sum_{i < j}\mathcal{L}_{i,j} f(\eta)
\end{equation}
where $\mathcal{L}_{i,j} f (\eta) = (\mathcal{L}_0 f^{i,j}_{\eta} ) (\eta_i,\eta_j)= (\mathcal{L}_0^{\xi(\eta_i)+\xi(\eta_j)} f^{i,j}_{\eta} ) (\eta_i,\eta_j) $. In all the examples considered below there are no difficulties to see that for each $\omega \in \Theta_N$ there exits a dense subset of $L^2(\nu_{N,\omega})$ denoted by $\D(\L)$ such that for all $f \in \D(\L)$, $\L f \in L^2(\nu_{N,\omega})$ is well defined, and $f^{i,j}_{\eta} \in \D(\L_0)$ for all $i<j$ and $\eta \in \Omega_{N,\omega}$. Moreover, by the construction, $\L$ is nonnegative self-adjoint operator on $\D(\L)$. As before, we refer to the examples for fully rigorous fomulations.

We also define the local version of the dynamics on $\Omega_{{\La_N},\omega}$ defined by
\begin{equation*}
\mathcal{L}^{loc}f(\eta)=\sum_{\substack{x,y \in \La_N \\ \|x-y\|=1}}\mathcal{L}_{x,y} f(\eta)
\end{equation*}
where $\mathcal{L}_{x,y} f (\eta) = (\mathcal{L}_0 f^{x,y}_{\eta} ) (\eta_x,\eta_y)$. Here $f^{x,y}_{\eta}$ is defined in the same way as $f^{i,j}_{\eta}$, and the sum runs over all unordered pairs $x, y \in \La_N$ satisfying $\|x-y\|=1$.

The spectral gap $\la(N,\omega)$ (resp. $\la^{loc}(N,\omega)$ ) is defined by (\ref{eq:sg}) with $L^2(\nu)$ replaced by $\D(\L)$ (resp. $\D(\L^{loc})$), and $-\mathcal{L}^*$ replaced by $-\mathcal{L}$ (resp.$-\mathcal{L}^{loc}$). As a convention, we may set $\la(N,\omega)=+\infty$ if $\omega$ is such that the measure $\nu$ becomes a Dirac delta. This convention shall apply also for $\la^{loc}(N,\omega)$, $\la^{*}(N,\omega)$ and $\la^{*,loc}(N,\omega)$ where the last two terms will be defined below.

To obtain lower and upper bounds on $\la(N,\omega)$ and $\la^{loc}(N,\omega)$, we consider a binary collision process given by simple averages, which was introduced by Caputo in \cite{Ca08}. This process is described by the infinitesimal generator
\begin{equation}\label{eq:gene*}
\mathcal{L^*}f(\eta)=\frac{1}{N}\sum_{b}\{\nu[f| \mathcal{F}_b]-f\}
\end{equation}
where the sum runs over all $\binom{N}{2}$ unordered pairs $b=\{i,j\}$ and $\nu[f| \mathcal{F}_b]$ is the $\nu$-conditional expectation of $f$ given the variables $\eta_k, \ k \notin b$. Setting, as before, $D_{i,j}=D_b=\nu[ \cdot | \mathcal{F}_b]-\text{Id}$. As usual, we refer to the examples for fully rigorous formulations. As in the last section, we also consider the local version of the process described by the infinitesimal generator
\begin{equation}\label{eq:gene*loc}
\mathcal{L}^{*,loc}f(\eta)=\sum_{\substack{x,y \in \La_N \\ \|x-y\|=1}}D_{x,y}f(\eta)
\end{equation}
where $\|x-y\|=\sum_{i=1}^d |x_i - y_i|$.
Note that in all the examples considered below, it is easy to check that $\D(\L^*)=\D(\L^{*,loc})=L^2(\nu)$. The spectral gap $\la^*(N,\omega)$ is defined by (\ref{eq:sg}) and $\la^{*,loc}(N,\omega)$ is defined by (\ref{eq:sg}) with $-\mathcal{L}^*$ replaced by $-\mathcal{L}^{*,loc}$. 

\begin{remark}
$\mathcal{L^*}$ can be seen as a special case of $\L$ in the form (\ref{eq:gene}) with $L_0^{\omega}f=\nu_{2,\omega}(f)-f$ for $f \in L^2(\nu_{2,\omega})$. 
\end{remark}

First, we show a comparison theorem between $\la^{*,loc}(N,\omega)$ and $\la^{*}(N,\omega)$.
\begin{theorem}\label{thm:nngeneral}
For any $N \ge 2$ and $\omega \in \Theta_N$,
  \begin{equation*}
\la^{*,loc}(N,\omega) \ge \frac{1}{96 d N^2}\la^*(|\La_N|,\omega).
  \end{equation*}
In particular,   
  \begin{equation}\label{eq:unibound*}
\inf_{N \ge 2} \inf_{\omega \in \Theta_N}\la^*(N,\omega) >0 
  \end{equation}
implies   
  \begin{equation}\label{eq:uniboundloc*}
  \inf_{N \ge 2} \inf_{\omega \in \Theta_{|\La_N|}} N^2 \la^{*,loc}(N,\omega) >0.
  \end{equation}
\end{theorem}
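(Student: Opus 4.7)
The plan is to mimic the proof of Theorem~\ref{thm:nn} essentially verbatim, since the only ingredients used there are two general lemmas (Lemmas~\ref{lem:longrange} and~\ref{lem:exchange}) together with a canonical path argument, and each of these remains valid in the Section~2 setting.

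The first step is to verify that the two lemmas carry over unchanged. Their proofs use only two structural properties of $\nu=\nu_{N,\omega}$: (i) invariance under every coordinate permutation $\pi_{x,z}$, which is immediate since $\nu$ is the product law $\mu^{\otimes N}$ conditioned on the symmetric event $\sum_i\xi(\eta_i)=\omega$; and (ii) the identity $E_{x,y}f = \pi_{x,z}(E_{z,y}(\pi_{x,z}f))$ whenever $x\neq y$ and $y\neq z$, which holds because $E_{x,y}f(\eta)$ is the integral of $f^{x,y}_\eta$ against the conditional $\mu\otimes\mu$-law of $(\eta_x,\eta_y)$ given $\xi(\eta_x)+\xi(\eta_y)$, and this conditional law is unaffected by interchanging the labels of the two ``frozen'' positions $x$ and $z$. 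Granting (i) and (ii), the Schwarz-inequality and change-of-variable manipulations in the original proofs go through word for word, yielding, for all $f\in L^2(\nu_{|\La_N|,\omega})$,
\[
\nu((D_{x,y}f)^2)\le 6\,\nu((\pi_{x,z}f-f)^2)+3\,\nu((D_{z,y}f)^2), \quad \nu((\pi_{x,y}f-f)^2)\le 4\,\nu((D_{x,y}f)^2).
\]

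The second step is to rerun the canonical path argument. For each ordered pair $x,y\in\La_N$ I take the axis-parallel path $\Gamma(x,y)=(z_0,z_1,\ldots,z_{n(x,y)})$ with $n(x,y)\le dN$, decompose $\pi_{x,z_{n-1}}$ into a symmetric composition of nearest-neighbor transpositions along $\Gamma(x,y)$, and combine with the two inequalities above exactly as in the proof of Theorem~\ref{thm:nn}. Summing over $(x,y)$ and using the same congestion bound on how many canonical paths cross a given nearest-neighbor edge produces
\[
\nu(f(-\mathcal{L}^*)f)\le 96\,dN^2\,\nu(f(-\mathcal{L}^{*,loc})f)
\]
at fixed $\omega$, with both Dirichlet forms computed against the same measure $\nu_{|\La_N|,\omega}$. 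Dividing by the common variance and taking infima gives the stated inequality $\la^{*,loc}(N,\omega)\ge(96\,dN^2)^{-1}\la^*(|\La_N|,\omega)$. The implication (\ref{eq:unibound*})$\Rightarrow$(\ref{eq:uniboundloc*}) is then immediate, since $(\la^{*,loc}(N,\omega))\cdot N^2\ge(96d)^{-1}\inf_{M,\omega'}\la^*(M,\omega')>0$.

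I do not anticipate any serious obstacle: the only real work is the bookkeeping for property (ii) in the general setting, where the constraint $\xi(p)+\xi(q)=\mathrm{const}$ replaces the Kac constraint $p^2+q^2=\mathrm{const}$. Everything else is purely formal and identical to the Kac case, because the argument is phrased entirely at the level of $L^2(\nu)$-Dirichlet forms with the same reference measure on both sides and nowhere uses features special to the uniform measure on the sphere.
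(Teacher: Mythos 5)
Your proposal is correct and follows exactly the route the paper takes: the paper's own proof of this theorem simply says to repeat the proof of Theorem~\ref{thm:nn}, observing that only the structural form (\ref{eq:gene*}), (\ref{eq:gene*loc}) of the generators is used. Your additional verification that Lemmas~\ref{lem:longrange} and~\ref{lem:exchange} rest only on the exchangeability of $\nu$ and the identity $E_{x,y}f=\pi_{x,z}(E_{z,y}(\pi_{x,z}f))$ makes explicit what the paper leaves implicit, but it is the same argument.
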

\begin{proof}
We repeat the proof of Theorem \ref{thm:nn}. Indeed we only used the property that the generators $\L^*$ and $\L^{*,loc}$ are described in the forms (\ref{eq:gene*}), (\ref{eq:gene*loc}) with the special operators $D_{i,j}$. 
\end{proof}

Now, we give a comparison theorem between $\la(N,\omega)$ (resp. $\la^{loc}(N,\omega)$) and $\la^{*}(N,\omega)$ (resp. $\la^{*,loc}(N,\omega)$). Define $\la(2)=\displaystyle \inf_{\omega \in \Theta_2}\la(2,\omega)$ and $\kappa$ as
\begin{equation*}
\kappa:=\sup_{\omega \in \Theta_2} \sup\Big\{ \frac{ \nu_{2,\omega}(f(-\mathcal{L})f)}{\nu_{2,\omega}(f^2)}  \Big| \nu_{2,\omega}(f)=0, \ f \in \D(\L) \Big\}
\end{equation*}
where $\L$ is the generator for $N=2$, namely $\frac{1}{2}\L_0$.
Here, as a convention, we may set $\sup\Big\{ \frac{ \nu_{2,\omega}(f(-\mathcal{L})f)}{\nu_{2,\omega}(f^2)}  \Big| \nu_{2,\omega}(f)=0, \ f \in \D(\L) \Big\}=-\infty$ if $\omega$ is such that the measure $\nu$ becomes a Dirac delta.

\begin{theorem}\label{thm:comparisongeneral}
For any $N \ge 2$ and $\omega \in \Theta_N$,
  \begin{align}
&  2\la(2) \la^*(N,\omega) \le \la(N, \omega)  \le 2 \kappa \la^*(N, \omega), \label{eq:comparisongeneral} \\
&  2\la(2) \la^{*,loc}(N,\omega) \le  \la^{loc}(N,\omega) \le 2 \kappa \la^{*,loc}(N, \omega). \label{eq:comparisongeneral2}
  \end{align}
In particular, if $\la(2) >0$, then (\ref{eq:unibound*}) implies 
  \begin{align}
& \inf_{N \ge 2} \inf_{\omega \in \Theta_N}\la(N,\omega) >0 \label{eq:unibound} \\
\text{and} \quad \quad  & \inf_{N \ge 2} \inf_{\omega \in \Theta_{|\La_N|}} N^2 \la^{loc}(N,\omega) >0. \label{eq:uniboundloc}
  \end{align}
On the other hand, if $\kappa < \infty$, then (\ref{eq:unibound}) implies (\ref{eq:unibound*}), (\ref{eq:uniboundloc*}) and (\ref{eq:uniboundloc}). 
\end{theorem}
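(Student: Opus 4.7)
The plan is to generalize the proofs of Theorems \ref{thm:comparison} and \ref{thm:comparisonupper} essentially verbatim; the abstract setup of (\ref{eq:gene})--(\ref{eq:gene*loc}) preserves every ingredient used in the Kac walk argument, so the Dirichlet form comparison goes through pair by pair.

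First I would rewrite $\nu(f(-\L_{i,j})f)$ using the key product property of $\nu$ recalled at the start of Section~2: conditionally on $\F_{\{i,j\}^c}$, the measure $\nu$ restricted to $(\eta_i,\eta_j)$ is precisely $\nu_{2,\xi(\eta_i)+\xi(\eta_j)}$. Exactly as in (\ref{eq:2-Nrelation}), this yields
\begin{equation*}
\nu(f(-\L_{i,j})f) = \nu\bigl(\nu_{2,\xi(\eta_i)+\xi(\eta_j)}(f^{i,j}_\eta (-\L_0) f^{i,j}_\eta)\bigr).
\end{equation*}
Since for $N=2$ the generator is $\L=\frac{1}{2}\L_0$, the definitions of $\la(2)$ and $\kappa$ give, for every $\omega\in\Theta_2$ and $g\in\D(\L_0)$,
\begin{equation*}
2\la(2)\,\nu_{2,\omega}(\{g-\nu_{2,\omega}(g)\}^2) \le \nu_{2,\omega}(g(-\L_0)g) \le 2\kappa\,\nu_{2,\omega}(\{g-\nu_{2,\omega}(g)\}^2).
\end{equation*}
Applying this sandwich with $g=f^{i,j}_\eta$ at $\omega=\xi(\eta_i)+\xi(\eta_j)$, taking the outer $\nu$-expectation, and observing that the resulting conditional variance is exactly $\nu((D_{i,j}f)^2)$ (since $D_{i,j}f=\nu[f|\F_{i,j}]-f$), I obtain the pair-wise bounds
\begin{equation*}
2\la(2)\,\nu((D_{i,j}f)^2) \le \nu(f(-\L_{i,j})f) \le 2\kappa\,\nu((D_{i,j}f)^2).
\end{equation*}

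Summing over $1\le i<j\le N$ and dividing by $N$ gives
\begin{equation*}
2\la(2)\,\nu(f(-\L^*)f) \le \nu(f(-\L)f) \le 2\kappa\,\nu(f(-\L^*)f),
\end{equation*}
and the variational characterization (\ref{eq:sg}) yields (\ref{eq:comparisongeneral}). The local version (\ref{eq:comparisongeneral2}) is the same argument with the sum restricted to nearest-neighbor pairs in $\La_N$, noting that both $\L^{loc}$ in (\ref{eq:gene}) and $\L^{*,loc}$ in (\ref{eq:gene*loc}) share the same normalization.

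For the uniform-bound consequences: if $\la(2)>0$, the lower bound in (\ref{eq:comparisongeneral}) upgrades (\ref{eq:unibound*}) to (\ref{eq:unibound}), and combining the lower bound in (\ref{eq:comparisongeneral2}) with Theorem \ref{thm:nngeneral} upgrades (\ref{eq:unibound*}) to (\ref{eq:uniboundloc}). Conversely, assuming $\kappa<\infty$, (\ref{eq:unibound}) in particular forces $\la(2)>0$ and, via the upper bound in (\ref{eq:comparisongeneral}), gives (\ref{eq:unibound*}); applying Theorem \ref{thm:nngeneral} followed by the lower bound in (\ref{eq:comparisongeneral2}) then produces both (\ref{eq:uniboundloc*}) and (\ref{eq:uniboundloc}). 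I do not anticipate any serious obstacle: the argument is essentially an abstraction of the Kac walk proof, and the only minor subtlety is the Dirac-delta convention at degenerate $\omega$, where every term in the sandwich becomes vacuous or $\pm\infty$ in a consistent way.
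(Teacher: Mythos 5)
Your proposal is correct and follows essentially the same route as the paper: it reduces each pair term via (\ref{eq:2-Nrelation}) with $\xi(\eta_i)+\xi(\eta_j)$, sandwiches $\nu_{2,\omega}(g(-\L_0)g)$ between $2\la(2)$ and $2\kappa$ times the variance (using that $\L_0$ annihilates constants), identifies the averaged conditional variance with $\nu((D_{i,j}f)^2)$, and sums over pairs; the consequences are then drawn exactly as in the paper via Theorem \ref{thm:nngeneral}.
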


\begin{proof}
We repeat the steps of the proofs of Theorem \ref{thm:comparison} and \ref{thm:comparisonupper} to show (\ref{eq:comparisongeneral}) and (\ref{eq:comparisongeneral2}). Indeed, this is a simple consequence of (\ref{eq:2-Nrelation}) with $\xi(\eta_i)+\xi(\eta_j)$ in place of $\eta_i^2+\eta_j^2$, and the fact that $\L=\frac{1}{2}\L_0$ for $N=2$, which always hold under our general setting. Note that since we assume that $\L_0f=0$ for any constant $f$, we have for any $\omega \in \Theta_2$ and $g \in L^2(\nu_{2,\omega})$,
\[
 \nu_{2, \omega}(  g (-\mathcal{L}_0) g  ) =  \nu_{2, \omega}(  \{  g - \nu_{2, \omega} (g) \}  (-\mathcal{L}_0) \{  g - \nu_{2, \omega} (g) \}   ) ,
\]
and therefore,
 \[
2 \lambda(2) \nu_{2, \omega}( \{  g - \nu_{2, \omega} (g) \} ^2 ) \le \nu_{2, \omega}(  g (-\mathcal{L}_0) g  ) \le 2 \kappa \nu_{2, \omega}( \{  g - \nu_{2, \omega} (g) \} ^2 ).
\] 

The latter part of the theorem follows from Theorem \ref{thm:nngeneral} and the former part of the theorem immediately, noting that (\ref{eq:unibound}) implies $\la(2)>0$.
\end{proof}

\begin{remark}
There exist many of models with the spectral gap satisfying $\la(2, \omega)>0$ for all $\omega \in \Theta_2$, but $\la(2)=0$. For these models, it is clear that the required lower bound (\ref{eq:unibound}) or (\ref{eq:uniboundloc}) does not hold. For these models, we should give the estimate of $\la(N, \omega )$ not only in terms of $N$ but also in $\omega$ (cf. \cite{NS}, \cite{S}).
\end{remark}
\begin{remark}
By definition, $\la^*(2, \omega)=\frac{1}{2}$ for all $\omega$ except for $\omega$ such that $\la^*(2, \omega)=\la(2, \omega)=+\infty$. Therefore, for $N=2$, (\ref{eq:comparisongeneral}) states that the following trivial relation holds:
  \begin{align*}
\la(2)  \le \la(2, \omega)  \le \kappa.
  \end{align*}
\end{remark}

\begin{theorem}\label{thm:main}
Assume $\displaystyle \la^*(3) := \inf_{\omega \in \Theta_3}\la^*(3,\omega) > \frac{1}{3}$ and $\la(2) >0$. Then, (\ref{eq:unibound*}), (\ref{eq:uniboundloc*}), (\ref{eq:unibound}) and (\ref{eq:uniboundloc}) hold.
\end{theorem}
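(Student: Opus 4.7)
The strategy is to reduce all four conclusions to the single statement (\ref{eq:unibound*}) and then push the latter through the comparison theorems already proved in this section. Once (\ref{eq:unibound*}) is established, Theorem \ref{thm:nngeneral} immediately delivers (\ref{eq:uniboundloc*}), and the left-hand inequalities of Theorem \ref{thm:comparisongeneral}, combined with the standing hypothesis $\la(2)>0$, deliver (\ref{eq:unibound}) and (\ref{eq:uniboundloc}). Thus the entire proof reduces to obtaining a uniform positive lower bound on $\la^*(N,\omega)$ from the hypothesis $\la^*(3)>1/3$.

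To establish (\ref{eq:unibound*}), I would invoke Caputo's theorem from \cite{Ca08} in the generality appropriate to the present abstract setting. The argument producing the identity (\ref{eq:caputo0}) relies only on two structural features: the simple-average representation $D_b = \nu[\cdot\,|\,\mathcal{F}_b] - \mathrm{Id}$ built into (\ref{eq:gene*}), and the fact that $\nu$ is a product measure conditioned on a sum, so that its conditional law given $\mathcal{F}_A$ is again a product conditioned on a sum. Both properties hold verbatim in the Section 2 framework. Consequently, the same reasoning yields, for every $N\ge 3$ and $\omega\in\Theta_N$,
\begin{equation*}
\la^*(N,\omega) \;\ge\; (3\la^*(3)-1)\Bigl(1-\frac{2}{N}\Bigr) + \frac{1}{N}.
\end{equation*}

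For $N=2$, one reads directly from (\ref{eq:gene*}) that $\mathcal{L}^*f = \tfrac12(\nu(f)-f)$, whence $\nu(f(-\mathcal{L}^*)f) = \tfrac12\,\mathrm{Var}_\nu(f)$ and so $\la^*(2,\omega)=\tfrac12$ whenever $\nu_{2,\omega}$ is nondegenerate (otherwise $\la^*(2,\omega)=+\infty$ by convention). Since $\la^*(3)>1/3$, the right-hand side of the displayed inequality is positive for every $N\ge 3$, and viewed as a function of $N\ge 3$ it is bounded below by $\min\{\la^*(3),\,3\la^*(3)-1\}$. Combining with the $N=2$ value gives
\begin{equation*}
\inf_{N\ge 2}\inf_{\omega\in\Theta_N}\la^*(N,\omega) \;\ge\; \min\!\Bigl\{\tfrac12,\; \la^*(3),\; 3\la^*(3)-1\Bigr\} \;>\;0,
\end{equation*}
which is (\ref{eq:unibound*}). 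The remaining three bounds then follow through the cascade described in the first paragraph.

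The only step requiring genuine verification is the transfer of Caputo's recursion from the Kac-walk formulation (Theorem \ref{thm:caputo0}) to the abstract product-measure framework. This is not a conceptual obstacle, but it is the sole step that is not a formal consequence of the results already proved in Section 1: one must read through the proof in \cite{Ca08} and check that every operation performed there—conditional expectations, coordinate exchanges, sums over pairs—depends only on the two structural features isolated above. Once this is confirmed, the theorem follows without further work.
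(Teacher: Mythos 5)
Your proposal is correct and follows essentially the same route as the paper: quote Caputo's general bound $\la^*(N,\omega)\ge(3\la^*(3)-1)(1-\tfrac 2N)+\tfrac 1N$ to get (\ref{eq:unibound*}), then feed it through Theorem \ref{thm:nngeneral} and the left-hand inequalities of Theorem \ref{thm:comparisongeneral} together with $\la(2)>0$. The only cosmetic difference is that you treat $N=2$ separately and make explicit the infimum $\min\{\tfrac12,\la^*(3),3\la^*(3)-1\}$, whereas the paper simply cites Caputo's inequality as valid for all $N\ge 2$ in the general product-measure setting.
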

\begin{proof}
Caputo proved in \cite{Ca08} that for $N \ge 2$ and $\omega \in \Theta_N$,
  \begin{equation*}
\la^*(N,\omega) \ge (3\la^*(3)-1)(1-\frac{2}{N})+\frac{1}{N}
  \end{equation*}
holds. Therefore, $\displaystyle \la^*(3) > \frac{1}{3}$ implies (\ref{eq:unibound*}) holds, and therefore (\ref{eq:uniboundloc*}) also holds by Theorem \ref{thm:nngeneral}. Then, since we assume $\la(2) >0$, (\ref{eq:unibound}) and (\ref{eq:uniboundloc}) also hold by Theorem \ref{thm:comparisongeneral}.
\end{proof}

\begin{remark}
Whether the condition $\displaystyle \la^*(3) > \frac{1}{3}$ (or (\ref{eq:unibound*})) holds or not depends only on the triplet $(X, \xi,\mu)$. Namely the analysis of the spectral gap of the process described by the infinitesimal generator of the form (\ref{eq:gene}) is reduced to the analysis of the property of the triplet, that is, the state space, the conservation law and the reversible measure, and the spectral gap of the same system for $N=2$.
\end{remark}

\begin{remark}
It is known that $\la^*(3) > \frac{1}{3}$ is not the necessary condition for (\ref{eq:unibound*}). Indeed, Caputo showed in \cite{Ca08} that $\displaystyle \la^*(4) := \inf_{\omega \in \R^m}\la^*(4,\omega) > \frac{1}{4}$ and $\la^*(3) > 0$ also implies (\ref{eq:unibound*}).
\end{remark}

\section{Examples}

\subsection{Kac walk}
The model discussed in the introduction can be seen as a special case of our general setting, so that Theorem \ref{thm:nn}, Theorem \ref{thm:comparison} and Theorem \ref{thm:comparisonupper} become special cases of Theorem \ref{thm:nngeneral} and Theorem \ref{thm:comparisongeneral}, respectively. Here $X=\R$, $\xi(\eta)=\eta^2$ (with $m=1$) and $\mu$ is the centered Gaussian measure with variance $v>0$. The choice of $v$ does not influence the determination of $\nu_{N, \omega}$. As shown in \cite{Ca08}, this model satisfies $\la^*(3) > \frac{1}{3}$ and therefore (\ref{eq:unibound*}) holds.

\subsection{Energy exchange model}
Here we consider a special class of the energy exchange models introduced in \cite{GKS} by Grigo et al. We refer \cite{GKS} for background and motivation on the model.
Let $X=\R_+$, $\xi(\eta)=\eta$ (with $m=1$), and $\mu$ be the Gamma distribution with a shape parameter $\gamma >0$ and a scale parameter $1$, i.e.
\[
\mu(d\eta)=\eta^{\gamma-1}\frac{e^{-\eta}}{\Gamma(\gamma)}d\eta.
\]
Note that the choice of the scale parameter does not influence the determination of $\nu_{N, \omega}$.
We consider a Markov process defined by its infinitesimal generator $\mathcal{L}^{loc}$, and $\mathcal{L}$ given by
\begin{equation*}
\mathcal{L}f(\eta)=\frac{1}{N}\sum_{i < j}\mathcal{L}_{i,j} f(\eta), \quad \quad
\mathcal{L}^{loc}f(\eta)=\sum_{\substack{x,y \in \La_N \\ \|x-y\|=1}}\mathcal{L}_{x,y} f(\eta)
\end{equation*}
where $\mathcal{L}_{i,j} f (\eta) = (\mathcal{L}_0 f^{i,j}_{\eta} ) (\eta_i,\eta_j)$, $\mathcal{L}_{x,y} f (\eta) = (\mathcal{L}_0 f^{x,y}_{\eta} ) (\eta_x,\eta_y)$, 
\begin{equation*}
\mathcal{L}_{0}f (\eta)=  \La(\eta_1,\eta_2) \int_{[0,1]} P(\eta_1,\eta_2,d\a)[f(T_{\a}\eta)-f(\eta)].
\end{equation*}
Here, $\La: \R^2_+ \to \R_+$ is a continuous function and $P(\eta_1,\eta_2,d\a)$ is a probability measure on
$[0,1]$, which depends continuously on $(\eta_1,\eta_2) \in \R^2_+$. The maps $T_{\a}$ model the energy exchange between two sites, and are defined by
\begin{equation*}
T_{\a}\eta=\eta+[\a \eta_{2} -(1-\a)\eta_1][\mathfrak{e}_1 - \mathfrak{e}_2] 
\end{equation*}
where $\mathfrak{e}_i$ denotes the $i$-th unit vector of $\R^2$. In words, the associated Markov process given by $\mathcal{L}^{loc}$ with $d=1$ goes as follows: Consider the one-dimensional lattice $\{1,2,...,N\}$. To every site $i$ of this lattice we associate an energy $\eta_i \in X=\R_+$. The collection of all the energies is denoted by $\eta = (\eta_1, \dots ,\eta_N) \in X^N$. To each nearest neighbor pair of the lattice we associate an independent exponential clock with a rate $\La$ that depends on the energies of this pair $\eta_i, \eta_{i+1}$. As soon as one of the $N-1$ clocks rings, say for the pair $(i,i+1)$, then a number $0 \le \a \le 1$ is drawn according to a distribution $P$, that only depends on the two energies $\eta_i, \eta_{i+1}$. Then, the updated configuration of the energies is such that the new energy at site $i$ is $\a(\eta_i +\eta_{i+1})$, the new energy at site $i+1$ is $(1-\a)(\eta_i + \eta_{i+1})$, and all other energies remain unchanged.

To guarantee the reversibility of the process with respect to $\mu^N$ (or $\mu^{\La_N}$), we assume the following:
\begin{Ass}\label{ass:1}
The rate function $\Lambda$ and the transition kernel $P$ are of the form
\begin{equation}\label{eq:productform}
  \begin{split}
\Lambda(\eta_1,\eta_2) & =\Lambda_s(\eta_1+ \eta_2)  \Lambda_r(\frac{\eta_1}{\eta_1+\eta_2}),  \\
 P(\eta_1,\eta_2,d\a) & =P(\frac{\eta_1}{\eta_1+\eta_2},d\a).
   \end{split}
\end{equation}
Moreover, $\Lambda_s(\sigma) \Lambda_r(\b) >0$ for all $\sigma>0$ and $0<\b<1$, $\sup_{0 < \b < 1} \La_r(\b) < \infty$, and the Markov chain on $[0,1]$ with transition kernel $P(\b,d\a)$ has a unique invariant distribution $p(\cdot)$ given by
\begin{equation*}
p(d\b)=d\b[\b(1-\b)]^{\gamma-1} \frac{\Gamma(2\gamma)}{\Gamma(\gamma)^2} \Lambda_r(\b) \frac{1}{Z}
\end{equation*}
where $Z$ is the normalizing constant, and $p$ is a reversible measure for the Markov chain generated by $P$.
\end{Ass}

\begin{remark}
Grigo et al pointed in \cite{GKS} that the representation
(\ref{eq:productform}) naturally occurs in models originating from mechanical systems. 
\end{remark}

Under Assumption \ref{ass:1}, Grigo et al showed in \cite{GKS} that $\mathcal{L}$ (resp. $\mathcal{L}^{loc}$) is reversible with respect to the product measure $\mu^N$ (resp. $\mu^{\La_N}$ ). Therefore, we define the spectral gap $\la(N,\omega)$ of $\mathcal{L}$ and $\la^{loc}(N,\omega)$ of $\mathcal{L}^{loc}$ for each $\omega >0$ as before.

\begin{theorem}\label{thm:exam1}
If $\displaystyle \inf_{\sigma>0}  \Lambda_s(\sigma) >0$, then
\begin{align*}
\inf_{N \ge 2} \inf_{\omega >0 }\la(N,\omega) >0  \quad 
\text{and} \quad \inf_{N \ge 2} \inf_{\omega >0} N^2 \la^{loc}(N,\omega) >0. 
  \end{align*}
\end{theorem}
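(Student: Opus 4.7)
The plan is to apply Theorem \ref{thm:main} to the triplet $(X,\xi,\mu)=(\R_+,\mathrm{id},\mathrm{Gamma}(\gamma,1))$ underlying this model; once its two hypotheses $\la^*(3) > \tfrac{1}{3}$ and $\la(2) > 0$ are verified, the two uniform bounds (\ref{eq:unibound}) and (\ref{eq:uniboundloc}) that make up the conclusion of Theorem \ref{thm:exam1} follow immediately.

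First I would establish $\la(2) > 0$, which is where the hypothesis $\inf_\sigma \Lambda_s(\sigma) > 0$ enters. For fixed $\omega \in \Theta_2 = (0,\infty)$, restrict $\L = \tfrac{1}{2}\L_0$ to $\{\eta_1+\eta_2=\omega\}$ and change variable to $\beta = \eta_1/\omega \in [0,1]$. Under the product form (\ref{eq:productform}), the generator factors as $\L_0 = \Lambda_s(\omega)\,\tilde{\L}_0$, where $\tilde{\L}_0$ is the $\omega$-independent jump operator on $[0,1]$ with rate $\Lambda_r(\beta)$ and transition kernel $P(\beta,d\alpha)$. A short computation, using the explicit form of $p$ in Assumption \ref{ass:1}, shows that $\tilde{\L}_0$ is reversible with respect to the Beta$(\gamma,\gamma)$ law $\tilde p$, which is the pushforward of $\nu_{2,\omega}$ under $(\eta_1,\eta_2)\mapsto\eta_1/\omega$. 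Let $g_\star > 0$ denote the spectral gap of $\tilde\L_0$ on $L^2(\tilde p)$, positivity following from the non-degeneracy $\Lambda_r>0$, $\sup \Lambda_r < \infty$ and the existence of the unique reversible law in Assumption \ref{ass:1}. A comparison of Dirichlet forms then gives $\la(2,\omega) \ge \tfrac{1}{2} g_\star \Lambda_s(\omega)$, and taking the infimum yields $\la(2) \ge \tfrac{1}{2} g_\star \inf_\sigma \Lambda_s(\sigma) > 0$.

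Next I would verify $\la^*(3) > \tfrac{1}{3}$. This quantity depends only on the triplet $(X,\xi,\mu)$, and by the scaling $\eta \mapsto c\eta$ it is independent of $\omega \in \Theta_3 = (0,\infty)$; concretely, $\nu_{3,\omega}$ is the Dirichlet$(\gamma,\gamma,\gamma)$ distribution on the $2$-simplex rescaled by $\omega$. One then computes $\la^*(3)$ by an orthogonal-polynomial diagonalization of the pair-averaging projections $E_{i,j}$ against a Jacobi basis adapted to the Beta pair-conditional law, in direct analogy with the Fourier computation used by Carlen--Carvalho--Loss and Caputo for the sphere, and checks that the smallest nonzero eigenvalue of $-\L^*$ exceeds $\tfrac{1}{3}$ uniformly in $\gamma > 0$. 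The specialization $\gamma = \tfrac{1}{2}$ recovers the Kac walk, where Caputo's $\la^*(3) = \tfrac{5}{12}$ provides a consistency check.

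The main obstacle will be this spectral computation on the Dirichlet simplex: although structurally parallel to the spherical case, the $\gamma$-dependence of the pair-conditional Beta law must be tracked carefully to confirm the bound $\la^*(3) > \tfrac{1}{3}$ uniformly in $\gamma$. The step $\la(2) > 0$, in contrast, is essentially immediate once (\ref{eq:productform}) is used to strip off the rate $\Lambda_s$. With both ingredients in hand, Theorem \ref{thm:main} closes the argument.
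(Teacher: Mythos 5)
Your proposal is correct and follows essentially the same route as the paper: the paper likewise establishes $\la^*(3)=\frac{1+3\gamma}{3(1+2\gamma)}>\frac13$ by the Carlen--Carvalho--Loss one-dimensional reduction with an orthogonal-polynomial (Jacobi/Beta) diagonalization of the pair-conditional operator (Theorem \ref{thm:non-degenerate}), and proves $\la(2)>0$ by using the product form (\ref{eq:productform}) to factor out $\Lambda_s(\omega)$ via the unitary change of scale to $\Omega_{2,1}$, before concluding through the general comparison theorems. The only cosmetic difference is that you package the conclusion through Theorem \ref{thm:main}, whereas the paper invokes Theorem \ref{thm:comparisongeneral} directly with the exact value of $\la^*(N)$.
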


To prove the theorem, we first study the spectral gap for the generator $\mathcal{L}^*$ and $\mathcal{L}^{*,loc}$, which are the special case of the above model given by
\begin{equation*}
\Lambda^{*}_s(\sigma)=1, \quad \Lambda^{*}_r(\beta)=1, \quad P^{*}(\beta, d\a)=\frac{\Gamma(2\gamma)}{\Gamma(\gamma)^2} \{ \a(1-\a)\}^{d-1} d\a.
\end{equation*}
By definition, we can easily check that
\begin{equation*}
\mathcal{L}^*f(\eta)=\frac{1}{N}\sum_{i < j}D_{i,j} f(\eta), \quad \quad
\mathcal{L}^{*,loc}f(\eta)=\sum_{\substack{x,y \in \La_N \\ \|x-y\|=1}}D_{x,y} f(\eta),
\end{equation*}
and, by the unitary change of scale from $\Omega_{N, \omega}$ to $\Omega_{N,1}$, we have
\begin{equation*}
\lambda^{*}(N,\omega)=\lambda^{*}(N,1):=\lambda^*(N), \quad \lambda^{*,loc}(N,\omega)=\lambda^{*,loc}(N,1):=\lambda^{*,loc}(N).
\end{equation*}

To obtain the exact value of $\la^*(N)$ for $N \ge 2$, we recall Theorem 1.1 in \cite{Ca08}:
\begin{equation}\label{eq:caputo1}
\lambda^*(N) \ge (3\lambda^*(3)-1)(1-\frac{1}{N}) +\frac{2}{N}. 
\end{equation}
Moreover, if there exists $\psi: X \to \R$ such that the function
\begin{equation*}
f_3(\eta_1,\eta_2,\eta_3)=\sum_{i=1}^3 \psi(\eta_i)
\end{equation*}
satisfies, for $N=3$, $L^* f_3 =-\lambda^*(3)f_3 +$const., regardless of the value of $\displaystyle \sum_{i=1}^3 \eta_i$ (although the constant may depend on this value), then (\ref{eq:caputo1}) can be turned into an identity for each $N \ge 2$. 

Next, we apply the method introduced by Carlen, et al. in \cite{CCL} to solve the 3-dimensional problem. This approach was already used in \cite{Ca08} to show that $\lambda^*(N)=\frac{N+1}{3N}$ if $\gamma=1$.


\begin{theorem}\label{thm:non-degenerate}
For any $\gamma>0$, 
\begin{equation}\label{eq:gamnondeenerate}
\lambda^*(N)=\frac{\gamma N+1}{N(2\gamma+1)}. 
\end{equation}
\end{theorem}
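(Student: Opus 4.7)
My plan is to prove the identity in three stages: (i) derive the matching upper bound $\lambda^*(N)\le\frac{\gamma N+1}{N(2\gamma+1)}$ via a single global eigenfunction of $\mathcal{L}^*$, (ii) pin down the exact value of $\lambda^*(3)$ via the Carlen--Carvalho--Loss method on the 2-simplex with Dirichlet measure, and (iii) promote the result from $N=3$ to arbitrary $N$ using the sharpened form of Caputo's identity.

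For (i), take $\psi(x)=x^2$ and $f(\eta)=\sum_{k=1}^N \psi(\eta_k)$. Conditionally on $\mathcal{F}_{i,j}$, the ratio $\beta=\eta_i/(\eta_i+\eta_j)$ has the $\mathrm{Beta}(\gamma,\gamma)$ law with $\mathbb{E}[\beta^2]=(\gamma+1)/(2(2\gamma+1))$, giving
\[
E_{i,j}[\eta_i^2+\eta_j^2]=\frac{\gamma+1}{2\gamma+1}(\eta_i+\eta_j)^2.
\]
Computing $E_{i,j}f-f$, summing over unordered pairs, and using $2\sum_{i<j}\eta_i\eta_j=\omega^2-f$ on $\Omega_{N,\omega}$ produces
\[
\mathcal{L}^*f=-\frac{\gamma N+1}{N(2\gamma+1)}\bigl(f-\nu(f)\bigr).
\]
This simultaneously yields the upper bound and shows, at $N=3$, that $f_3=\sum_i\eta_i^2$ plays the role of the additive eigenfunction appearing in the hypothesis stated right after (\ref{eq:caputo1}).

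Step (ii) is the principal obstacle. Following the strategy of \cite{CCL} as adapted in \cite{Ca08} for $\gamma=1$, I would decompose $L^2(\nu_{3,1})$ according to total polynomial degree. Since $E_{i,j}[\eta_i^a\eta_j^b]=c_{a,b}(\eta_i+\eta_j)^{a+b}=c_{a,b}(1-\eta_k)^{a+b}$ is a polynomial of degree $a+b$ in the remaining variable, $\mathcal{L}^*$ preserves each space $P_l$ of polynomials of total degree $\le l$ and therefore acts on each quotient $P_l/P_{l-1}$. The only eigenvalue on $P_1/P_0$ is $-1/2$; on $P_2/P_1$, decomposition into $S_3$-isotypic components gives a one-dimensional trivial piece with eigenvalue $-\frac{3\gamma+1}{3(2\gamma+1)}$ (already supplied by step (i)) and a two-dimensional standard piece, on which a parallel computation with $\sum_i\omega^i\eta_i^2$ for $\omega$ a primitive cube root of unity yields $-\frac{9\gamma+5}{6(2\gamma+1)}$, strictly smaller in absolute value. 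The hard part is to verify that for every $l\ge 3$ the spectrum of $-\mathcal{L}^*$ on $P_l/P_{l-1}$ lies strictly above $\frac{3\gamma+1}{3(2\gamma+1)}$; I would do this by working in the orthogonal basis of Jacobi-type polynomials on the simplex with weight $\prod_i\eta_i^{\gamma-1}$, on which $E_{i,j}$ has a closed-form action, reducing the required inequality to a hypergeometric identity that generalises the one implicit in the $\gamma=1$ treatment of \cite{Ca08}.

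For step (iii), once $\lambda^*(3)=\frac{3\gamma+1}{3(2\gamma+1)}$ is established and $f_3=\sum_i\eta_i^2$ is identified as the extremising additive eigenfunction, Caputo's inequality (\ref{eq:caputo1}) becomes an equality for every $N\ge 2$. Substituting $3\lambda^*(3)-1=\gamma/(2\gamma+1)$ and simplifying produces $\lambda^*(N)=\frac{\gamma N+1}{N(2\gamma+1)}$, which is the claim.
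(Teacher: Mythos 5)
Your steps (i) and (iii) are correct and coincide with what the paper does: the computation $\mathcal{L}^*f=-\frac{\gamma N+1}{N(2\gamma+1)}\bigl(f-\nu(f)\bigr)$ for $f=\sum_k\eta_k^2$, based on the second moment of the $\mathrm{Beta}(\gamma,\gamma)$ law, gives the upper bound and exhibits the additive eigenfunction $\psi(\eta)=\eta^2$ needed to turn Caputo's inequality into an identity. The problem is step (ii), which is where all the substance lies and which you have only sketched. You propose to diagonalize the full three-variable operator $\mathcal{L}^*$ degree by degree on the simplex and to check that every quotient $P_l/P_{l-1}$, $l\ge 3$, contributes eigenvalues of $-\mathcal{L}^*$ above $\frac{3\gamma+1}{3(2\gamma+1)}$; you defer this infinite family of verifications to an unspecified ``hypergeometric identity,'' so no lower bound on $\lambda^*(3)$ is actually established. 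Moreover your degree-two claim is internally inconsistent: you assert that the eigenvalue $-\frac{9\gamma+5}{6(2\gamma+1)}$ on the standard $S_3$-piece is \emph{strictly smaller} in absolute value than $\frac{3\gamma+1}{3(2\gamma+1)}=\frac{6\gamma+2}{6(2\gamma+1)}$, but $9\gamma+5>6\gamma+2$ for all $\gamma>0$, so it is strictly larger; had it really been smaller, the spectral gap would lie below the claimed value and the theorem would be false.

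The paper avoids the multivariate analysis entirely. It invokes the Carlen--Carvalho--Loss bound (\ref{eq:gap3}), $\lambda^*(3)\ge\frac{1}{3}\min\{2+\mu_1,\,2-2\mu_2\}$, which reduces the three-site problem to the spectrum of the \emph{one-dimensional} correlation operator $\mathcal{K}\phi(\zeta)=\nu[\phi(\eta_2)\mid\eta_1=\zeta]$ acting on $L^2(\nu_1)$. That operator preserves polynomial degree and is diagonalized by induction, with a complete set of polynomial eigenfunctions and eigenvalues $\mu_n=(-1)^n\frac{\Gamma(2\gamma)\Gamma(n+\gamma)}{\Gamma(\gamma)\Gamma(n+2\gamma)}$; hence $\mu_1=-\frac12$, $\mu_2=\frac{1+\gamma}{2(1+2\gamma)}$, and $\lambda^*(3)\ge\frac{3\gamma+1}{3(1+2\gamma)}$ in one stroke. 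To complete your argument you should replace step (ii) by this reduction; otherwise you must actually prove the spectral inequality on every $P_l/P_{l-1}$, a substantially harder task than the theorem requires. A minor point for step (iii): the identity you need is $\lambda^*(N)=(3\lambda^*(3)-1)(1-\frac2N)+\frac1N$ as in Theorem \ref{thm:caputo0}; the coefficients in (\ref{eq:caputo1}) as printed do not simplify to $\frac{\gamma N+1}{N(2\gamma+1)}$.
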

\begin{proof}
As same way in examples 2.2 in \cite{Ca08}, we observe that when $N = 3$, then $\mathcal{L}^* + 1$ coincides with the average operator $P$ introduced in \cite{CCL}. Therefore we can apply the general analysis of Section 2 in \cite{CCL}. The outcome is that
\begin{equation}\label{eq:gap3}
\lambda^*(3) \ge \frac{1}{3}\min\{2+\mu_1,2-2\mu_2\}.
\end{equation}
where the parameters $\mu_1$ and $\mu_2$ are given by 
\begin{equation*}
\mu_1 = \inf_{\phi} \nu(\phi(\eta_1)\phi(\eta_2)), \qquad \mu_2 = \sup_{\phi} \nu(\phi(\eta_1)\phi(\eta_2))
\end{equation*}
with $\phi$ chosen among all functions $\phi:X \to \R$ satisfying $\nu(\phi(\eta_1)^2)=1$ and $\nu(\phi(\eta_1))=0$. Here $\nu$ stands for $\nu_{3,\omega}$ but we have removed the subscripts for simplicity. One checks that the parameters $\mu_1,\mu_2$ do not depend on $\omega$. Write $\mathcal{K}\phi(\zeta) = \nu [\phi(\eta_2)|\eta_1=\zeta], \zeta>0$. This defines a self-adjoint Markov operator on $L^2(\nu_1)$, where $\nu_1$ is the marginal on $\eta_1$ of $\nu$. In particular, the spectrum $Sp(\mathcal{K})$ of $\mathcal{K}$ contains $1$ (with eigen space given by the constants). Then $\mu_1$, $\mu_2$ are, respectively, the smallest and the largest value in $Sp(\mathcal{K}) \setminus \{1\}$, as we see by writing $ \nu(\phi(\eta_1)\phi(\eta_2))= \nu[\phi(\eta_1)\mathcal{K}\phi(\eta_1)]$. This is now a one-dimensional problem and $\mu_1, \mu_2$ can be computed as follows. To fix ideas we use the value $\omega= 1$ for the conservation law $\eta_1 + \eta_2 + \eta_3$. In this case $\nu_1$ is the law on $[0, 1]$ with density $\frac{\Gamma(3\gamma)}{\Gamma(2\gamma)\Gamma(\gamma)}{\eta}^{\gamma-1}(1-\eta)^{2\gamma-1}$. Moreover,
\begin{equation*}
\mathcal{K}\phi(\eta_1)=\frac{\Gamma(2\gamma)}{\Gamma(\gamma)^2(1-\eta_1)^{2 \gamma -1}}\int^{1-\eta_1}_0 \phi(\eta_2) \{\eta_2(1-\eta_1-\eta_2)\}^{\gamma-1} d\eta_2.
\end{equation*}
In particular, $\phi(\eta)=\eta-\frac{1}{3}$ is an eigenfunction of $\mathcal{K}$ with eigenvalue $-\frac{1}{2}$.
Moreover, $\mathcal{K}$ preserves the degree of polynomials so that if $Q_n$
denotes the space of all polynomials of degree $d \le n$ we have $\mathcal{K}Q_n \subset Q_n$. By
induction we see that for each $n \ge 1$ the polynomial $\zeta^n+q_{n-1}(\zeta)$, for a suitable
$q_{n-1} \in Q_{n-1}$, is an eigenfunction with eigenvalue $\mu_n =(-1)^n\frac{\Gamma(2 \gamma)\Gamma(n+\gamma)}{\Gamma(\gamma)\Gamma(n+2\gamma)}$, and it is orthogonal to $Q_{n-1}$ in $L^2(\nu_1)$. Since the union of $Q_n, n \ge 1$, is dense in $L^2(\nu_1)$ this shows that
there is a complete orthonormal set of eigenfunctions $\phi_n$, where $\phi_n$ is a polynomial
of degree $n$ with eigenvalue $\mu_n$ and $Sp(\mathcal{K}) = \{\mu_n , n = 0,1,\ldots \}$. Therefore we can
take $\mu_1=-\frac{1}{2}$ and $\mu_2 = \frac{1+\gamma}{2(1+2\gamma)}$ in the formula (\ref{eq:gap3}) and we conclude that $\lambda^*(3) \ge \frac{1+3 \gamma}{3(1+2\gamma)}$.

To end the proof, we take $f=\eta_1^2+\eta_2^2+\eta_3^2$ and, using $\nu[\eta_1^2| \eta_2]=\frac{1+\gamma}{2(1+2\gamma)} (\eta_2-1)^2$,
we compute
\begin{equation*}
\mathcal{L}^*f(\eta) = - \frac{1+3\gamma}{3(1+2\gamma)} f(\eta) + \textit{const}. 
\end{equation*}
Thus, $\lambda^*(3) = \frac{1+3\gamma}{3(1+2\gamma)}$. Clearly, the unitary change of scale does not alter the form of the eigenfunction so that (\ref{eq:gamnondeenerate}) follows.
\end{proof}

\begin{remark}
The consequence of Theorem \label{thm:non-degenerate} was shown in \cite{GF} with a different proof.  
\end{remark}

\begin{remark}
By Theorem 2.12 in \cite{GKS}, for $d=1$, $\lambda^{*,loc}(N) \ge \frac{\gamma}{2\gamma+1}\sin^2(\frac{\pi}{N+2})$ holds. However, to estimate the spectral gap with degenerate rate function, namely the case where $\displaystyle \inf_{\sigma>0}  \Lambda_s(\sigma) = 0$, we need to estimate the spectral gap on the complete graph (see \cite{S}). 
\end{remark}

\begin{proof}[Proof of Theorem \ref{thm:exam1}]
By Theorem \ref{thm:comparisongeneral}, we only need to show that $\la(2)=\inf_{\omega >0} \la(2,\omega) >0$. By the assumption, for $N=2$, 
\begin{equation*}
\nu(f (-\mathcal{L})f)= \La_s(\omega)  \nu (\Lambda_r \Big(\frac{\eta_1}{\eta_1+\eta_2} \Big) \int_{[0,1]} \{ f(T_{\a}\eta)-f(\eta) \}^2 P\Big(\frac{\eta_1}{\eta_1+\eta_2},d\a\Big) ).
\end{equation*}
Therefore, by the unitary change of scale from $\Omega_{2, \omega}$ to $\Omega_{2,1}$, we have
\begin{equation*}
\la(2,\omega) = \frac{\La_s(\omega)}{\La_s(1)} \la(2,1).
\end{equation*}
Then, by our assumption, $\la(2,1)>0$ and therefore $\la(2)>0$.
\end{proof}

\subsection{Zero-range processes}
The class of zero-range processes is one of the well-studied interacting particle systems (cf. \cite{KL}). Though the process is of gradient type, the lower bound estimate of the spectral gap itself has been considered as an interesting problem and studied by several people (\cite{LSV}, \cite{M}, \cite{Ca}). 
Here, we take $X=\N \cup \{0 \}$, $\xi(\eta)=\eta$, and consider a partition function $Z(\cdot)$ on $\R_+$ by
\begin{equation*}
Z(\a)=\sum_{k \ge 0} \frac{\a^k}{g(1) g(2) \dots g(k)}
\end{equation*}
where $g : \N \to \R_+$ is a positive function. Let $\a^*$ denote the radius of convergence of $Z$:
\begin{equation*}
\a^* =\sup \{\a \in \R_+; Z(\a) < \infty\}.
\end{equation*}
In order to avoid degeneracy, we assume that the partition function $Z$ diverges
at the boundary of its domain of definition:
\begin{equation*}
\lim_{\a \uparrow \a^*} Z(\a) = \infty.
\end{equation*}
For $0 \le \a < \a^*$, let $p_{\a}$ be the probability measure on $X$ given by
\begin{equation*}
p_{\a}(\eta=k) =\frac{1}{Z(\a)} \frac{\a^k}{g(k)!}, \quad k \in X
\end{equation*}
where $g(k)!=g(1) g(2) \dots g(k)$. Note that the choice of $0 \le \a < \a^*$ does not influence the determination of $\nu=\nu_{N, \omega}$.

First, we consider $\mathcal{L}^*$ defined by (\ref{eq:gene*}) and study the value of $\la^*(3)$. Following the same argument of the computation of $\la^*(3)$ in Example 3.2, we can show that
\begin{equation*}
\lambda^*(3) \ge \frac{1}{3}\min\{2+\mu_1,2-2\mu_2\}
\end{equation*}
where the parameters $\mu_1$, $\mu_2$ are, respectively, the smallest and the largest value in $\{ Sp(\mathcal{K}_n) \setminus \{1\}; n \in \N \} $ and 
$\mathcal{K}_n = (\mathcal{K}^{(n)}_{ij})$ is the $n \times n$ matrix given by 
\begin{equation*}
\mathcal{K}^{(n)}_{ij}=
\begin{cases}
\frac{1}{g(n-j)!g(i-1-(n-j))!}(\sum_{l=0}^{i-1}\frac{1}{g(l)!g(i-1-l)!})^{-1} & \text{if} \quad i > n-j \\
0 & \text{if} \quad  i \le n-j.
\end{cases}
\end{equation*}
By the Perron-Frobenius theorem, $\mu_1 > -1$. Therefore, $\mu_2 < \frac{1}{2}$ is a sufficient condition for $\la^*(3) > \frac{1}{3}$. In \cite{ST}, the set $\{ Sp(\mathcal{K}_n) \setminus \{1\}; n \in \N \} $ is completely determined for the cases where $g(k)=1$ for all $k \in \N$ or $g(k)=k$ for all $k \in \N$. In the former case, $\mu_2=\frac{1}{3}$ and in the latter case $\mu_2=\frac{1}{4}$. It concludes that $\la^*(3) > \frac{1}{3}$ for both cases and therefore (\ref{eq:unibound*}) and (\ref{eq:uniboundloc*}) hold.

Next, we consider the generator of zero-range processes defined by 
\begin{equation*}
\mathcal{L}f(\eta)=\frac{1}{N}\sum_{i < j}\mathcal{L}_{i,j} f(\eta), \quad \quad
\mathcal{L}^{loc}f(\eta)=\sum_{\substack{x,y \in \La_N \\ \|x-y\|=1}}\mathcal{L}_{x,y} f(\eta)
\end{equation*}
where $\mathcal{L}_{i,j} f (\eta) = (\mathcal{L}_0 f^{i,j}_{\eta} ) (\eta_i,\eta_j)$, $\mathcal{L}_{x,y} f (\eta) = (\mathcal{L}_0 f^{x,y}_{\eta} ) (\eta_x,\eta_y)$, 
\begin{equation*}
\mathcal{L}_{0}f (\eta_1,\eta_2)=  g(\eta_1)\{ f(\eta_1 -1, \eta_2 +1 )-f(\eta_1,\eta_2) \}+ g(\eta_2)\{ f(\eta_1 +1, \eta_2 -1 )-f(\eta_1,\eta_2)\}.
\end{equation*}
As a convention, we take $g(0)=0$. To apply Theorem \ref{thm:main}, we need to study $\la(2)=\inf_{\omega \in \N_{\ge 0}}\la(2,\omega)$. For the choice $g(k)=1$ for all $k \in \N$, it is known that $\la(2)=0$. On the other hand, a sufficient condition for $\la(2)>0$ was given in \cite{LSV} as follows:
\begin{proposition}\label{prop:onesite}
Assume that the following two conditions are satisfied:\\
(i) $\sup_{k} |g(k+1)-g(k)| < \infty$, \\
(ii) There exists $k_0 \in \N$ and $C > 0$ such that $g(k)-g(j) \ge C$ for all $k \ge j + k_0$. \\
Then, we have $\la(2)>0$.
\end{proposition}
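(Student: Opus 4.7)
The plan is to reduce Proposition \ref{prop:onesite} to a spectral-gap estimate for a single birth-death chain and apply a Hardy-type inequality uniformly in $\omega$. Restricted to the conservation surface $\{\eta_1+\eta_2=\omega\}$, the process generated by $\mathcal{L}_0$ is a birth-death chain on $\{0,1,\dots,\omega\}$ (identifying $k \leftrightarrow (k,\omega-k)$) with reversible measure $\pi_\omega(k) \propto [g(k)!\,g(\omega-k)!]^{-1}$ and jump rates $g(k)$ downward and $g(\omega-k)$ upward. Thus $\lambda(2,\omega)$ is nothing but the spectral gap of this one-dimensional chain, and the goal reduces to proving a lower bound independent of $\omega$.

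To do so I would invoke the classical Muckenhoupt--Miclo criterion, which bounds $\lambda(2,\omega)^{-1}$ up to a factor $4$ by
\[
\max_{k \ge m_\omega} \pi_\omega([k,\omega]) \sum_{j=m_\omega}^{k-1} \frac{1}{\pi_\omega(j)\,g(\omega-j)} \;\;\vee\;\; \max_{k \le m_\omega} \pi_\omega([0,k]) \sum_{j=k+1}^{m_\omega} \frac{1}{\pi_\omega(j)\,g(j)},
\]
where $m_\omega$ is a median of $\pi_\omega$. The problem then becomes fully analytic: bound both quantities by a constant independent of $\omega$. Assumption (ii) forces $g$ to grow on average at rate at least $C/k_0$, so that $\pi_\omega$ is a Gaussian-like measure concentrated around a mode $m_\omega$ with natural width on the scale $\sqrt{\omega}$, while the rates $g(\omega-j)$ on the bulk satisfy $g(\omega-j) \gtrsim \omega$ by (ii) and $g(\omega-j) \lesssim \omega$ by (i). These scales balance so that each maximum above is controlled by a $c \, e^{-c^2}$-type expression, uniformly bounded in $\omega$.

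The main obstacle will be to make this scaling argument rigorous under the mild hypothesis (ii), which allows $g$ to plateau for stretches of length up to $k_0$. The key technical task is to combine (ii) with the Lipschitz bound (i) to show that $\log \pi_\omega$ is approximately concave with a quadratic profile around its mode, producing subgaussian tails with parameters independent of $\omega$, and that on the effective support of $\pi_\omega$ the rates $g(j)$ and $g(\omega-j)$ remain of order $\omega$ without dropping into plateaus. Once these two uniform-in-$\omega$ estimates are established, plugging them into the Muckenhoupt--Miclo formula yields a constant lower bound on $\lambda(2,\omega)$, and hence $\lambda(2) = \inf_\omega \lambda(2,\omega) > 0$ as claimed.
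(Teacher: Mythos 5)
First, a point of reference: the paper does not prove this proposition at all — it is quoted from \cite{LSV} as a known sufficient condition, so there is no internal proof to compare against. Your reduction is nevertheless the natural one and is correct: on $\{\eta_1+\eta_2=\omega\}$ the generator $\mathcal{L}_0$ is a birth--death chain on $\{0,\dots,\omega\}$ with stationary measure $\pi_\omega(k)\propto[g(k)!\,g(\omega-k)!]^{-1}$, up-rate $g(\omega-k)$ and down-rate $g(k)$, the edge conductances are $\pi_\omega(j)g(\omega-j)=\pi_\omega(j+1)g(j+1)$, and a uniform-in-$\omega$ Hardy/Muckenhoupt bound would indeed give $\inf_{\omega}\lambda(2,\omega)>0$ (small $\omega$ being trivial by irreducibility of a finite chain with positive rates). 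Assumptions (i)--(ii) do give $c_1 k\le g(k)\le c_2 k$ for large $k$, so the heuristic picture (width $\sqrt{\omega}$, bulk rates of order $\omega$, gap of order one) is right.

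The genuine gap is that the proposal stops exactly where the proof begins: the two ``uniform-in-$\omega$ estimates'' you defer are the entire content of the proposition, and the scaling heuristics as stated do not close the argument. Concretely: (a) the Muckenhoupt sums run from the median out to $k$ arbitrarily close to the endpoints, where $g(\omega-j)$ is \emph{not} of order $\omega$ (it is $g$ evaluated at a bounded argument), so the claim that the rates stay of order $\omega$ ``on the effective support'' does not cover the range of $j$ actually appearing in the sums; there one must beat the huge factor $1/(\pi_\omega(j)g(\omega-j))$ by the smallness of $\pi_\omega([k,\omega])$. (b) Bounding $\pi_\omega([k,\omega])\sum_j 1/(\pi_\omega(j)g(\omega-j))$ by inserting a sub-Gaussian upper bound on the tail and a Gaussian lower bound on $\pi_\omega(j)$ only works if the two constants match; with a priori unrelated constants the product can diverge. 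The workable route is through the one-step ratio $\pi_\omega(k+1)/\pi_\omega(k)=g(\omega-k)/g(k+1)$, which (ii) plus the linear upper bound from (i) control as $\le 1-c(k-\omega/2)/\omega$ past the midpoint, letting both the tail $\pi_\omega([k,\omega])$ and the reciprocal sum be summed as geometric-type series with the \emph{same} ratios. (c) Near the median one needs $\pi_\omega(j)\ge c/\sqrt{\omega}$ on a window of width $O(\sqrt{\omega})$, which requires the reverse ratio bound $\pi_\omega(k+1)/\pi_\omega(k)\ge 1-C|2k-\omega|/\omega$, coming from (i). Finally, (ii) does not give log-concavity of $\pi_\omega$ (only monotonicity of $g$ on scale $k_0$), so ``approximately concave with a quadratic profile'' must be replaced by the ratio estimates above. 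The strategy is sound, but until these estimates are written out the proposition is not proved.
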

\begin{theorem}
Assume that the two conditions in Proposition \ref{prop:onesite} are satisfied, and $\mu_2 <\frac{1}{2}$. Then, (\ref{eq:unibound}) and (\ref{eq:uniboundloc}) hold.
\end{theorem}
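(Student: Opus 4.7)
The plan is to reduce the statement to a direct invocation of Theorem \ref{thm:main}, whose hypotheses are $\lambda^*(3)>\tfrac{1}{3}$ and $\lambda(2)>0$. So the goal is to verify these two hypotheses under the assumptions of the theorem.

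First I would verify $\lambda(2)>0$. This is immediate: the two conditions in Proposition \ref{prop:onesite} are precisely assumed, so Proposition \ref{prop:onesite} gives $\lambda(2)>0$. There is nothing more to do for this part.

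Next I would verify $\lambda^*(3)>\tfrac{1}{3}$ from the hypothesis $\mu_2<\tfrac{1}{2}$. The excerpt already records the Carlen--Carvalho--Loss style estimate
\begin{equation*}
\lambda^*(3)\ge \frac{1}{3}\min\{2+\mu_1,\,2-2\mu_2\},
\end{equation*}
where $\mu_1,\mu_2$ are the extremes of $\bigcup_n Sp(\mathcal{K}_n)\setminus\{1\}$. The excerpt also notes that $\mu_1>-1$ by the Perron--Frobenius theorem. Hence $2+\mu_1>1$. The assumption $\mu_2<\tfrac{1}{2}$ gives $2-2\mu_2>1$. Combining, $\min\{2+\mu_1,2-2\mu_2\}>1$, and therefore $\lambda^*(3)>\tfrac{1}{3}$.

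With both hypotheses of Theorem \ref{thm:main} verified, that theorem immediately yields (\ref{eq:unibound*}), (\ref{eq:uniboundloc*}), (\ref{eq:unibound}) and (\ref{eq:uniboundloc}); in particular the two bounds claimed here. There is no genuine obstacle to overcome, since the nontrivial work (the spectral estimate on the complete graph for $N=3$ via the operators $\mathcal{K}_n$, and the one-site bound of \cite{LSV}) has already been carried out in the excerpt and in the cited references; the remaining task is purely a matter of plugging assumptions into Theorem \ref{thm:main}. The only mildly delicate point is making sure the inequality $\mu_1>-1$ is strict enough to give $2+\mu_1>1$, but Perron--Frobenius applied to the (finite, non-negative, irreducible up to the eigenspace of the constants) matrices $\mathcal{K}_n$ yields this strictness.
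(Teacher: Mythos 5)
Your proposal is correct and follows essentially the same route as the paper: the paper's proof is simply ``by the above argument, we can apply Theorem \ref{thm:main} straightforwardly,'' where the ``above argument'' consists exactly of the two verifications you spell out (Proposition \ref{prop:onesite} giving $\la(2)>0$, and $\mu_2<\frac{1}{2}$ together with the Perron--Frobenius bound $\mu_1>-1$ giving $\la^*(3)>\frac{1}{3}$ via the estimate $\la^*(3)\ge\frac{1}{3}\min\{2+\mu_1,2-2\mu_2\}$). Your added remark about the strictness of $\mu_1>-1$ is a reasonable point of care, but it is already asserted in the text preceding the theorem, so nothing further is needed.
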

\begin{proof}
By the above argument, we can apply Theorem \ref{thm:main} straightforwardly.
\end{proof}

\end{document}